\documentclass[11pt]{article}

\usepackage[margin=.9in]{geometry}
\usepackage[utf8]{inputenc}
\usepackage[T1]{fontenc}
\usepackage{amsmath,amssymb,amsthm,mathtools, todonotes, booktabs}
\usepackage{bbm}
\usepackage{enumitem}
\usepackage[numbers,sort&compress]{natbib}
\usepackage[colorlinks=true,linkcolor=blue,citecolor=blue,urlcolor=blue]{hyperref}
\usepackage[nameinlink,capitalise,noabbrev]{cleveref}
\usepackage{mathrsfs, enumitem}
\usepackage{algorithm}
\usepackage{algpseudocode}

\numberwithin{equation}{section}

\newtheorem{theorem}{Theorem}[section]
\newtheorem{corollary}[theorem]{Corollary}
\newtheorem{lemma}[theorem]{Lemma}

\newtheorem{assumption}[theorem]{Assumption}
\newtheorem{definition}[theorem]{Definition}
\theoremstyle{remark}
\newtheorem{remark}[theorem]{Remark}

\newcommand{\R}{\mathbb R}
\newcommand{\N}{\mathbb N}
\newcommand{\E}{\mathbb E}

\newcommand{\calE}{\mathcal E}

\newcommand{\supp}{\operatorname{supp}}
\newcommand{\dd}{\,\mathrm d}
\newcommand{\eps}{\varepsilon}
\newcommand{\norm}[1]{\left\lVert #1\right\rVert}
\newcommand{\abs}[1]{\left\lvert #1\right\rvert}

\newcommand{\dOm}{\partial\Omega}

\title{Operator-Split Bayesian Learning for Elliptic PDEs with Unequal Interior and Boundary Data}
\author{\Large Emmanuel E. Oguadimma\thanks{oguadime@oregonstate.edu}\\
Department of Mathematics, Oregon State University
}
\date{}

\begin{document}
\maketitle

\begin{abstract}
We propose an operator-split Bayesian learning framework for second-order uniformly elliptic Dirichlet problems with unequal numbers of interior and boundary observations. The data consist of noisy measurements of the source in the domain and noisy measurements of the boundary values. Independent Bayesian neural-network (BNN) priors are assigned to these two quantities, and the resulting product posterior is pushed forward through the elliptic solution operator. We prove that the posterior induced by this construction contracts around the true solution. The contraction radius separates a domain contribution, governed by the second-order elliptic operator, from a boundary contribution, governed by the intrinsic dimension of the boundary. Together with the minimax lower bound of \cite{ZhaoLu2026}, this yields a near-minimax upper bound up to logarithmic factors. Our numerical experiments illustrate the propagation of source and boundary uncertainty and the effects of unequal sampling budgets on the posterior reconstruction.
\end{abstract}
\noindent\textbf{Keywords:}
Bayesian neural networks; elliptic PDEs; posterior contraction; operator splitting; uncertainty quantification.

\section{Introduction}
Many problems in science and engineering are formulated by differential equations that describe how physical, biological, and engineered systems evolve \citep{Evans2010,QuarteroniSaccoSaleri2007,oguadimma2026foundational}. In special cases, these equations can be solved analytically, and their solutions often reflect qualitative structures such as stability, conservation laws, invariants, and symmetries \citep{oguadimma2026foundational}. For most models of practical interest, however, closed-form solutions do not exist, so one relies on numerical approximations. Numerical methods such as finite differences, finite elements, and spectral methods have therefore formed the foundation of scientific computing. More recently, neural network approximations have emerged as mesh-free and highly expressive alternatives, motivated in part by high-dimensional problems, inverse problems, and the difficulty of repeatedly building problem-specific discretizations \citep{SirignanoSpiliopoulos2018,Yu2018,RaissiPerdikarisKarniadakis2019,obieke2025structure,obieke2026structure,oguadimma2026operator}. Physics-informed neural networks (PINNs) approximate solutions of differential equations by fitting neural networks against losses that combine equation residuals, boundary or initial conditions, and available data \citep{RaissiPerdikarisKarniadakis2019}.

The traditional PINNs \cite{RaissiPerdikarisKarniadakis2019} typically produce a point estimator, and therefore do not directly quantify uncertainty in the learned solution. This limitation becomes important when measurements are noisy, sparse, or drawn from physical experiments with uncertain inputs or parameters. Bayesian inference offers a principled framework for uncertainty quantification by treating unknown functions \cite{RasmussenWilliams2006}, model parameters \cite{Stuart2010}, and neural-network weights \cite{Neal1996,MacKay1992} as random quantities endowed with prior distributions. Bayesian PINNs have recently been proposed for uncertainty quantification in forward and inverse problems governed by differential equations \citep{YangMengKarniadakis2021}. Rigorous posterior contraction results for elliptic problems are more recent and have primarily been developed for constructions that place a prior directly on the solution and use balanced interior and boundary sample sizes \cite{ZhaoLu2026}.

\subsection{Summary of Contributions}

In this paper, we construct an operator-split Bayesian learning framework for elliptic Dirichlet problems with noisy and imbalanced interior and boundary measurements. Unlike direct Bayesian PINN formulations that place a prior on the solution itself \cite{YangMengKarniadakis2021, ZhaoLu2026, SunMukherjeeAtchade2024}, the proposed construction assigns separate Bayesian neural-network priors to the interior source and boundary data and then propagates the resulting uncertainty through the elliptic solution operator.

The main contributions are as follows:
\begin{enumerate}[leftmargin=*, label=\arabic*.]
    \item We formulate independent Bayesian regression models for the interior source and the boundary observations, allowing the two data sets to have different sample sizes and to be analyzed at their respective dimensions.

    \item We combine the source and boundary posteriors through a product posterior and push this distribution forward through the elliptic solution map, thereby inducing a posterior distribution on the PDE solution.

    \item We prove posterior contraction in the physics-informed loss for sufficiently H\"older-smooth solutions. The resulting contraction radius separates the contribution from the \(d\)-dimensional interior observations from that of the \((d-1)\)-dimensional boundary observations.

    \item We show that the resulting two-sample contraction upper bound agrees, up to logarithmic factors, with the minimax lower bound of \cite{ZhaoLu2026}.

    \item We derive a boundary sampling condition under which the boundary contribution does not dominate the interior contribution, providing guidance for allocating unequal interior and boundary sampling budgets.

    \item We present numerical experiments that illustrate the propagation of source and boundary uncertainty and the effect of varying the two sample sizes on the posterior reconstruction.
\end{enumerate}



\subsection{Paper Organization}
The remainder of this paper is organized as follows. 
\Cref{sec:related} reviews related work on neural-network approximations of PDEs, Bayesian PINNs, Bayesian neural-network posterior contraction, and boundary penalties in residual minimization. 
\Cref{sec:preliminaries} introduces the notation, elliptic problem, neural-network classes, physics-informed loss, Bayesian formulation, posterior contraction framework, and theoretical results used in the analysis. 
\Cref{sec:split} constructs the operator-split Bayesian posterior, including the elliptic solution operators, boundary parametrization, product posterior, and induced pushforward posterior on the solution space. 
\Cref{sec:main-results} establishes the main posterior contraction theorem, its near-minimax comparison, and the boundary sampling condition. 
\Cref{sec:numerical-experiments} presents numerical experiments for one- and two-dimensional elliptic problems. 
Finally, \Cref{sec:conclusion} summarizes the main findings and discusses possible extensions.

\subsection*{Acknowledgment.}
We would like to thank Yuxuan Zhao for helpful discussions.

\subsection{Related work}
\label{sec:related}

\paragraph{Convergence theory for residual and variational neural approximations.}
Neural-network approximations of differential equations are often analyzed through residual or variational losses. PINNs enforce the governing equation by penalizing sampled residuals and boundary discrepancies \citep{RaissiPerdikarisKarniadakis2019}. The deep Galerkin method uses a residual formulation, whereas the Deep Ritz method is based on the corresponding energy principle \citep{SirignanoSpiliopoulos2018,Yu2018}. Error estimates for these methods typically combine neural-network approximation bounds, statistical sampling estimates, and stability estimates for the underlying differential operator \citep{MishraMolinaro2023,DeRyckMishra2022,ShinZhangKarniadakis2023}. For elliptic problems, \cite{LuChenLuYingBlanchet2022} established minimax-optimal rates for PINNs and the Deep Ritz method under homogeneous Dirichlet boundary conditions. These results concern frequentist point estimators obtained from empirical residual or variational minimization. Our work instead studies posterior contraction for a Bayesian physics-informed construction with noisy interior source observations and noisy nonhomogeneous boundary observations. The unequal sample sizes in the two data sources lead to separate \(d\)-dimensional interior and \((d-1)\)-dimensional boundary rates.

\paragraph{Bayesian PINNs.}
Bayesian PINNs were introduced by \cite{YangMengKarniadakis2021} as a probabilistic extension of PINNs for forward and inverse PDE problems with noisy data. Instead of returning only one trained neural-network solution, the Bayesian formulation places a posterior distribution on solutions and physical parameters, making it possible to report uncertainty in the learned fields. This framework has been used for nonlinear dynamical systems \citep{LinkaSchaferMengZouKarniadakisKuhl2022}, subsurface tomography \citep{GouZhangZhuGao2023}, flow-field tomography \citep{MolnarGrauer2022}, and turbulent-flow uncertainty quantification \citep{ShuklaZouKaeuferTriantafyllouKarniadakis2026}. Rigorous contraction theory is more recent. \cite{SunMukherjeeAtchade2024} study Bayesian PINNs for inverse problems using noisy observations of the solution itself, whereas \cite{ZhaoLu2026} prove posterior contraction for elliptic equations with noisy measurements of the interior source and nonhomogeneous Dirichlet boundary data. In this paper, we build on the latter observation model, but modify the posterior construction. 

\paragraph{Bayesian neural-network posterior contraction.}
Bayesian neural networks place priors on neural-network weights and use the resulting posterior distribution to quantify uncertainty in predictions \citep{MacKay1995,Neal1995}. Their asymptotic theory is commonly studied through posterior contraction, where one proves that the posterior concentrates around the true regression function at a rate determined by the prior mass, testing, and entropy or sieve estimates \citep{GhosalVanderVaart2007,GhosalVanderVaart2017}. For deep Bayesian neural networks, \cite{PolsonRockova2018} proved posterior concentration for sparse ReLU networks with spike-and-slab priors, building on approximation theory for deep neural networks. Later work established contraction guarantees for masked Bayesian neural networks \citep{KongYangLeeOhnBaekKim2023}, Besov function classes \citep{LeeLee2022,EgelsCastillo2025,LeeLinParkJeong2025}, and fully connected networks with non-sparse general weight priors \citep{KongKim2025}. 

\paragraph{Boundary penalties in residual minimization.}
For a boundary-value problem on a spatial domain \(\Omega\) with boundary \(\partial\Omega\), the boundary terms in physics-informed losses require separate control from the interior residual terms. As shown in \cite{MullerZeinhofer2022}, imposing boundary conditions through an \(L^2(\partial\Omega)\) penalty leads to a loss of $3/2$ derivatives in residual-minimization estimates. In particular, approximation at the \(H^2(\Omega)\) level gives error control only in \(H^{1/2}(\Omega)\), whereas enforcing the boundary condition exactly avoids this loss and yields a sharper residual error estimate. The posterior contraction result of \cite{ZhaoLu2026} is formulated in a physics-informed loss that includes the same \(L^2(\partial\Omega)\) boundary penalty. We retain that loss, but separate the statistical boundary component from the interior source component before applying the elliptic solution operator. This separation is what allows the boundary contribution to contract at its intrinsic \((d-1)\)-dimensional rate.

\section{Preliminaries}
\label{sec:preliminaries}

\subsection{Notation}

We write \(\R\), \(\N\), and \(\N_0\) for real numbers, positive integers, and nonnegative integers, respectively. Let \(\Omega\subset\R^m\) be a bounded domain with boundary \(\partial\Omega\). The volume measure on \(\Omega\) is denoted by \(\dd x\), and the surface measure on \(\partial\Omega\) by \(\dd S\). For $\mathbf{z} = (z_1, z_2, \cdots, z_m)^\top \in \Omega$, we denote 
\[
\|\mathbf z\|_{\ell^p}
:=
\left(\sum_{j=1}^m |z_j|^p\right)^{1/p}, \quad \|\mathbf z\|_{\ell^0} := \sum_{j=1}^m \mathbb I(z_j \neq 0), \quad \text{and} \quad \|\mathbf z\|_{\ell^\infty}=\max_{1\le j\le m}|z_j|,
\]
for $1\le p<\infty$. For $\mathbf z^{(1)}, \mathbf z^{(2)} \in \Omega$, we define the max$(\cdot, \cdot)$ operator by 
\[
    \text{max}(\mathbf z^{(1)}, \mathbf z^{(2)}) = \left(\text{max}\left(z_1^{(1)}, z_1^{(2)}\right), \text{max}\left(z_2^{(1)}, z_2^{(2)}\right), \cdots \text{max}\left(z_m^{(1)}, z_m^{(2)}\right)\right).
\]
Given real-valued functions \(v:\Omega\to\R\) and \(w:\partial\Omega\to\R\), we denote 
\[
   \norm{v}_{L^p(\Omega)}=\left(\int_\Omega |v(\mathbf z)|^p\,\dd x\right)^{1/p},\quad \norm{w}_{L^p(\partial\Omega)}=\left(\int_{\partial\Omega}|w(\mathbf y)|^p\,\dd S(\mathbf y)\right)^{1/p},
\]
for $1\le p<\infty.$ 

Let P\(_{\mathbf Z}\) be a probability measure on a compact set
\(\mathcal Z\subset \mathbb R^m\). For any measurable function
\(h:\mathcal Z\to \mathbb R\) and \(1\le p<\infty\), we write
\begin{align}\label{pdf}
\norm{h}_{L^p(\text{P}_{\mathbf Z})}
:=
\left(\int_{\mathbf Z \in \mathcal Z} |h(\mathbf z)|^p\,\dd \mathrm{P}_{\mathbf Z}\right)^{1/p} \quad \text{and} \quad \|h\|_{L^\infty} := \sup_{\mathbf z \in \mathcal{Z}} |h(\mathbf{z})|
\end{align}
We write \(a_n \lesssim b_n\) when \(a_n \le C b_n\) for some universal constant independent of \(n \in \mathbb{N}\), and \(a_n \asymp b_n\) when both \(a_n \lesssim b_n\) and \(b_n \lesssim a_n\) hold for sequences \(\{a_n\}\) and \(\{b_n\}\). 

For a multi-index \(\boldsymbol{\alpha} \in \mathbb{N}_0^m\), we denote $\partial^{\boldsymbol{\alpha}} = \partial_1^{\alpha_1} \cdots \partial_m^{\alpha_m}.$ We assume the true function belongs to the \(\beta\)-H\"older class \(\mathcal{H}_m^\beta(K)\), defined by
\[
\mathcal{H}_m^\beta(K)
:=
\left\{
h : \mathcal{Z} \to \mathbb{R} \;:\; \|h\|_{\mathcal{H}^\beta} \le K
\right\},
\]
where the H\"older norm \(\|\cdot\|_{\mathcal{H}^\beta}\) is given by
\[
\|h\|_{\mathcal{H}^\beta}
:=
\sum_{\boldsymbol{\alpha}:\,\|\boldsymbol{\alpha}\|_1 < \beta}
\|\partial^{\boldsymbol{\alpha}} h\|_{L^\infty}
+
\sum_{\boldsymbol{\alpha}:\,\|\boldsymbol{\alpha}\|_1 = \lfloor \beta \rfloor}
\sup_{\mathbf{z}_1 \neq \mathbf{z}_2 \in \mathcal{Z}}
\frac{
\big|\partial^{\boldsymbol{\alpha}} h(\mathbf{z}_1) - \partial^{\boldsymbol{\alpha}} h(\mathbf{z}_2)\big|
}{
\|\mathbf{z}_1 - \mathbf{z}_2\|_\infty^{\beta - \lfloor \beta \rfloor}
}.
\]

\subsection{General Elliptic Equation}\label{elipsec}
Let $\Omega\subset \mathbb R^d$ be a bounded open subset with smooth boundary $\partial\Omega$. We consider the second-order elliptic equation with Dirichlet boundary condition
\begin{align}
\mathcal Lu &= f,\quad  \text{in } \Omega, \label{eq:elliptic-model}\\
u &= g,  \quad \text{on } \partial\Omega, \nonumber
\end{align}
where $\mathcal Lu := -\operatorname{div}(A\nabla u)+Vu.$ We assume that the matrix coefficient, $A:\Omega\to \mathbb R^{d\times d}$, and potential, $V$, are sufficiently smooth. That is, $A, V \in C^\infty(\overline{\Omega})$, and there exist constants \(r_{\min},C_A,V_{\min},V_{\max}>0\) such that $r_{\min}I\preceq A(x),\,\sup_{x\in\Omega}\norm{A(x)}_\infty\vee \norm{\nabla A(x)}_\infty\le C_A,\, \text{and }0<V_{\min}\le V(x)\le V_{\max}$ over $\Omega$. 

In addition, we assume that \(f \in C^{\beta-2}(\Omega)\) and \(g \in C^\beta(\partial\Omega)\) with \(\beta > 2\). By the standard elliptic regularity theory \cite{Grisvard2011}, equation \eqref{eq:elliptic-model} admits a unique strong solution \(u^\ast \in C^\beta(\Omega)\).

\subsection{Deep Neural Networks}
Let $\mathbf r=\bigl(r^{(0)},r^{(1)},\ldots,r^{(L)},r^{(L+1)}\bigr)^\top
\in\mathbb N^{L+2}$ be the layer-width vector of the network, where \(L\in\mathbb N\) is the number of hidden layers,
\(r^{(0)}=m\) is the input dimension, \(r^{(L+1)}=1\) is the output dimension. We employ fully connected deep neural networks (DNNs) throughout this paper.

The \(l\)-th hidden layer contains
\(r^{(l)}\) neurons. For \(l=1,\ldots,L+1\), let $\mathcal A_l: \mathbb{R}^{r^{(l-1)}} \to \mathbb R^{r^{(l)}}$ be an affine map defined as $\mathcal A_l(\mathbf z)=W_l \mathbf z+\mathbf b_l,$ with $W_l\in\mathbb R^{r^{(l)}\times r^{(l-1)}}$ and $\mathbf b_l \in \mathbb R^{r^{(l)}}$. Given an activation function \(\boldsymbol \sigma:\mathbb R\to\mathbb R\), applied componentwise, the network output is
\begin{align}\label{nn}
h_{\boldsymbol \theta,\boldsymbol \sigma}^{\mathrm{DNN}}
=
\mathcal A_{L+1}
\circ
\boldsymbol \sigma 
\circ
\mathcal A_L
\circ
\cdots
\circ
\boldsymbol \sigma 
\circ
\mathcal A_1.
\end{align}
In the preceding equation, the parameter vector, $\boldsymbol{\theta}$, is given by $\boldsymbol{\theta}=(\boldsymbol{\theta}_w^\top,\boldsymbol{\theta}_b^\top)^\top$, which collects all the parameters of the DNN model, where
\begin{align*}
    \boldsymbol{\theta}_w=\bigl(\operatorname{vec}(W_1)^\top,
\ldots,\operatorname{vec}(W_{L+1})^\top
\bigr)^\top \quad \text{and} \quad  \boldsymbol{\theta}_b
=(\mathbf b_1^\top,\ldots,\mathbf b_{L+1}^\top)^\top,
\end{align*}
are the weight matrices and bias vectors, respectively. The total dimension of $\boldsymbol{\theta}$ is
\begin{align}\label{num}
    N := N(L,\mathbf r) = \sum_{l=1}^{L+1} \bigl(r^{(l-1)}+1\bigr)r^{(l)}.
\end{align}
We consider DNN architectures with fixed width, which means that each hidden layer contains the same number of neurons. Given an activation function \(\boldsymbol \sigma\), the number of hidden layers \(L\in\mathbb N\), and the hidden-layer width \(r\in\mathbb N\), we define the corresponding class of DNN functions, parameterized by \(\theta\in\mathbb R^{N}\), as follows.

\begin{definition}
Let \(\boldsymbol \sigma \) be an activation function, let \(L\in\mathbb N\) be the depth, and let \(r\in\mathbb N\) be the width. We define
\(\mathcal M_{\boldsymbol \sigma}^{\mathrm{DNN}}(L,r)\) as the class of fully connected DNNs with architecture $\bigl(L,(m,r,\ldots,r,1)^\top\bigr),$ and activation function \(\boldsymbol \sigma\). That is,
\[
\mathcal M_{\boldsymbol \sigma}^{\mathrm{DNN}}(L,r)
:=
\left\{
h : h=h_{\boldsymbol \theta,\boldsymbol \sigma}^{\mathrm{DNN}}
\text{ is a DNN with architecture }
\bigl(L,(m,r,\ldots,r,1)^\top\bigr)
\right\}.
\]
\end{definition}

\begin{definition}[Truncated DNN class]
For \(B\ge 1\) and $t\in \mathbb R$, define
\[
T_B(t)=\max\{-B,\min(t,B)\}.
\]
The truncated DNN class associated with
\(\mathcal M_{\boldsymbol\sigma}^{\mathrm{DNN}}(L,r)\) is
\[
\mathcal M_{\boldsymbol\sigma}^{\mathrm{DNN}}(L,r,B)
:=
\left\{
T_B\circ h:
h\in \mathcal M_{\boldsymbol\sigma}^{\mathrm{DNN}}(L,r)
\right\}.
\]
\end{definition}

\subsection{Physics-Informed Neural Networks}
In this section, we briefly recall the physics-informed neural network formulation for the elliptic Dirichlet problem \eqref{eq:elliptic-model}. A physics-informed neural network provides a residual-minimization approach to approximating solutions of differential equations.  

For \(u\in H^2(\Omega)\), we define the (population) physics-informed loss by
\begin{align}\label{eloss}
\mathcal E(u)=
\|\mathcal L u-f\|_{L^2(\Omega)}^2
+
\lambda\|u-g\|_{L^2(\partial\Omega)}^2,
\end{align}
for $\lambda\in (0, \infty).$ Equivalently, if \(u^\ast\) denotes the exact solution of \eqref{eq:elliptic-model}, then
\begin{align}\label{eploss1}
\mathcal E(u)
:=
\|\mathcal L(u-u^\ast)\|_{L^2(\Omega)}^2
+
\lambda
\|u-u^\ast\|_{L^2(\partial\Omega)}^2.
\end{align}

\subsubsection*{Discrete Physics-Informed Loss}

Let $X_i\stackrel{\mathrm{i.i.d.}}{\sim}\mathcal{U}(\Omega)$. We observe $f_i=f(X_i)+\eps_i$, which is the right hand side of the PDE \eqref{eq:elliptic-model}, with $ \eps_i\stackrel{\mathrm{i.i.d.}}{\sim}\mathcal N(0,\sigma_{\Omega,0}^2),$ where the noise variables are independent of the sampling locations. Similarly, let
\(Y_j\stackrel{\mathrm{i.i.d.}}{\sim}\mathcal U(\partial\Omega)\), and set $g_j=g(Y_j)+\eta_j,$ with $\eta_j\stackrel{\mathrm{i.i.d.}}{\sim}\mathcal N(0,\sigma_{\partial,0}^2)$, where the noise variables are independent of the the sampled boundary points. Denote $\mathcal D_\Omega^{(n_\Omega)}=\{(X_i,f_i)\}_{i=1}^{n_\Omega},\, \mathcal D_\partial^{(n_\partial)}=\{(Y_j,g_j)\}_{j=1}^{n_\partial},$ and $\mathcal D^{(n_\Omega,n_\partial)}= \mathcal D_\Omega^{(n_\Omega)} \cup \mathcal D_\partial^{(n_\partial)}.$

The empirical analogue of \eqref{eloss} is defined by
\begin{align}
\widehat{\mathcal E}
\bigl(u;\mathcal D^{(n_\Omega,n_\partial)}\bigr)
&:=
\frac{|\Omega|}{n_\Omega}
\sum_{i=1}^{n_\Omega}
\left|
\mathcal L u(X_i)-f_i
\right|^2
+
\lambda
\frac{|\partial\Omega|}{n_\partial}
\sum_{j=1}^{n_\partial}
\left|
u(Y_j)-g_j
\right|^2.
\label{eq:empirical-pinn-loss}
\end{align}

\subsection{Bayesian Deep Learning}

Let \(h_{\boldsymbol \theta,\boldsymbol \sigma}^{\mathrm{DNN}}\) be a DNN with architecture \((L,\mathbf r)\), as defined in \eqref{nn}, and let \(\boldsymbol \theta=(\theta^{(1)},\ldots,\theta^{(N)})^\top\in\mathbb R^{N}\) denote the vector of all weights and biases. In Bayesian deep learning, the network parameters are treated as random variables. We assign a prior distribution \(\Pi\) on \(\mathbb R^N\), and update this prior using the observed training data.

Given the training data \(\mathcal D^{(n)}\) and the likelihood function \(\mathscr L(\boldsymbol \theta\mid \mathcal D^{(n)})\), the posterior distribution over the network parameters is
\[
\Pi_n(A\mid \mathcal D^{(n)}) 
=
\frac{
\int_A \mathscr L(\boldsymbol \theta\mid \mathcal D^{(n)})\,d\Pi(\boldsymbol \theta)
}{
\int_{\mathbb R^{N}} \mathscr L(\boldsymbol \theta\mid \mathcal D^{(n)})\,d\Pi(\boldsymbol \theta)
},
\qquad A\subset \mathbb R^N.
\]
For a new test example \(\mathbf z \in \mathcal{Z}\), the Bayesian predictive distribution is obtained by averaging over the posterior distribution of the network parameters:
\[
p(y\mid \mathbf z,\mathcal D^{(n)})
=
\int_{\mathbb R^N}
p(y\mid \mathbf z,\theta)\,\Pi_n(d\boldsymbol \theta\mid \mathcal D^{(n)}).
\]
Since closed form solutions of the preceding integral do not usually exist, it is commonly approximated by Monte Carlo method. If
\(\boldsymbol \theta_1,\ldots,\boldsymbol \theta_B\sim \Pi_n(\cdot\mid \mathcal D^{(n)})\), which are the samples drawn from the posterior distribution, then
\[
p(y\mid z,\mathcal D^{(n)})
\approx
\frac{1}{B}
\sum_{b=1}^{B}
p(y\mid z,\boldsymbol \theta_b).
\]
In practice, such samples may be generated using stochastic-gradient Markov chain Monte Carlo methods, including stochastic-gradient Langevin dynamics \cite{WellingTeh2011}, stochastic-gradient Hamiltonian Monte Carlo \cite{ChenFoxGuestrin2014}, and cyclical stochastic-gradient MCMC \cite{ZhangLiZhangChenWilson2020}. Another common approach is variational inference, including practical variational inference for neural networks \cite{Graves2011} and Bayes by Backprop \cite{BlundellCornebiseKavukcuogluWierstra2015}.

\subsection{Posterior Concentration For Deep Learning}
Let the training data \(\mathcal D^{(n_\Omega,n_\partial)}\), consisting of interior measurements $\{(X_i,f_i)\}_{i=1}^{n_\Omega}$ and boundary measurements $\{(Y_j,g_j)\}_{j=1}^{n_\partial},$ be given by
\[
\mathcal D^{(n_\Omega,n_\partial)}
=
\mathcal D^{(n_\Omega)}
\cup
\mathcal D^{(n_\partial)}
=
\{(X_i,f_i)\}_{i=1}^{n_\Omega}
\cup
\{(Y_j,g_j)\}_{j=1}^{n_\partial}.
\]
For a solution \(u\), let \(p_u^{(n_\Omega)}\) denote the likelihood density of the interior data \(\mathcal D^{(n_\Omega)}\), and let P\(_u^{(n_\Omega)}\) be the corresponding probability distribution. Similarly, let \(q_u^{(n_\partial)}\) denote the likelihood density of the boundary data \(\mathcal D^{(n_\partial)}\), and let Q\(_u^{(n_\partial)}\) be its corresponding distribution.

Let \(\Pi\) be a prior distribution on a solution class \(\mathscr U\). By Bayes' theorem, the posterior mass of any measurable set \(\mathscr E\subset \mathscr U\) is
\[
\Pi(\mathscr E\mid \mathcal D^{(n_\Omega,n_\partial)})
=
\frac{
\int_{\mathscr E}
p_u^{(n_\Omega)}
q_u^{(n_\partial)}
\,d\Pi(u)
}{
\int_{\mathscr U}
p_u^{(n_\Omega)}
q_u^{(n_\partial)}
\,d\Pi(u)
}.
\]
The posterior contraction property describes how fast this posterior distribution concentrates around the true solution \(u^\ast\) as the sample sizes increase. For \(\epsilon>0\) and \(M>0\), define the \((M\varepsilon)\)-neighborhood of \(u^\ast\) by
\[
A_{\varepsilon,M}
=
\left\{
u\in\mathscr U:
\mathcal E(u)\le M^2\varepsilon^2
\right\}.
\]
The complement \(A_{\varepsilon,M}^c\) consists of solutions whose physics-informed error $\mathcal E(u)$ is larger than the radius \(M\varepsilon\).

A sequence \(\varepsilon_{n_\Omega,n_\partial}\) is called a posterior concentration rate if, for every sequence
\(M_{n_\Omega,n_\partial}\to\infty\),
\[
\Pi\!\left(
A_{\varepsilon_{n_\Omega,n_\partial},M_{n_\Omega,n_\partial}}^c
\mid
\mathcal D^{(n_\Omega,n_\partial)}
\right)
\to 0
\]
in P$_{u^\ast}^{(n_\Omega)}$Q$_{u^\ast}^{(n_\partial)}$ \text{-probability} as \(n_\Omega\wedge n_\partial\to\infty\). Equivalently,
\[
\text{P}_{u^\ast}^{(n_\Omega)}
\text{Q}_{u^\ast}^{(n_\partial)}
\left[
\Pi\!\left(
A_{\varepsilon_{n_\Omega,n_\partial},M_{n_\Omega,n_\partial}}^c
\mid
\mathcal D^{(n_\Omega,n_\partial)}
\right)
\right]
\to 0.
\]
This means that, with probability tending to one under the true data-generating law, the posterior assigns asymptotically negligible mass to functions whose physics-informed loss exceeds the contraction radius.

\subsection{Known Theoretical Results}
\label{subsec:known-results}

We first review the theoretical results underlying our posterior contraction analysis. The posterior concentration theorem of \cite{KongKim2025} provides contraction rates for fully connected Bayesian neural networks in nonparametric regression under broad weight priors, including Gaussian priors, and therefore justifies treating the interior and boundary observations as Bayesian regression problems without imposing sparsity assumptions. We then recall the Bayesian PINN contraction theory and minimax lower bound of \cite{ZhaoLu2026}, which form the main benchmark for our posterior contraction analysis of the elliptic boundary value problem studied in this paper.

\subsubsection{Nonparametric Gaussian Regression} 
Let \(\mathcal D^{(n)}=\{(X_i,Y_i)\}_{i=1}^n\) be independent observations generated from $X_i\sim \text{P}_X,\,Y_i\mid X_i\sim \mathcal N(h_0(X_i),\sigma_0^2)$ where P\(_X\) is a probability measure on a compact set \(\mathcal X\subset [-a,a]^d\). Here \(h_0:\mathcal X\to\mathbb R\) is the true regression function and \(\sigma_0^2>0\) is the variance of the noise. We assume that \(\|h_0\|_\infty\le B\) for $\mathcal H^\alpha_d$ for some \(B\ge 1\), \(\alpha>0\), and \(K\ge 1\).

For the Bayesian inference, we utilize a fully connected DNN with
Leaky-ReLU activation
\[
    \boldsymbol \rho_\nu(t)=\max(t,\nu t),\quad \nu\in[0,1),
\]
and write the corresponding network as
\(h_{\boldsymbol\theta,\rho_\nu}^{\mathrm{DNN}}\). We consider the probabilistic model
\[
    Y_i\mid X_i,\boldsymbol\theta,\sigma^2
    \sim
    \mathcal N\!\left(
        T_B\circ h_{\boldsymbol\theta,\rho_\nu}^{\mathrm{DNN}}(X_i),
        \sigma^2
    \right),
    \qquad i=1,\ldots,n .
\]
The network architecture, $(L_n,\mathbf r_n)$, is chosen according to the smoothness \(\alpha\),
the input dimension \(d\), and the sample size \(n\). Specifically, we take
\begin{align}\label{arch}
    L_n &= \lceil C_1\log n\rceil,\quad r_n = \left\lceil C_2 n^{\frac{d}{2(2\alpha+d)}}\right\rceil,\quad \mathbf r_n = (d,r_n,\ldots,r_n,1)^\top,
\end{align}
where \(C_1,C_2>0\) are constants. Consequently, the likelihood for the Bayesian regression model is
\[
    \mathscr L(\boldsymbol\theta,\sigma^2\mid \mathcal D^{(n)})
    =
    (2\pi\sigma^2)^{-n/2}
    \exp\left\{
        -\frac{1}{2\sigma^2}
        \sum_{i=1}^n
        \left(
            Y_i
            -
            T_B\circ h_{\boldsymbol\theta,\rho_\nu}^{\mathrm{DNN}}(X_i)
        \right)^2
    \right\}.
\]
We then assign a prior \(\Pi_n\) on
\(\boldsymbol\theta\in\mathbb R^{N_n}\), where $N_n$ is defined as 
\begin{align}\label{num1}
    N_n := (d+ 1)r_n + (L_n-1)(r_n + 1)r_n + (r_n + 1),
\end{align}
and an independent prior \(\Xi\) on the variance parameter \(\sigma^2\in\mathbb R_+\).

The following assumption gives a lower-bound condition on the density of the parameter prior. It ensures that the prior assigns sufficient mass to bounded neighborhoods of neural-network parameters.

\begin{assumption}\label{ass:KK-parameter-prior}
Let \(\Pi_n\) be the prior distribution of the parameter 
\(\boldsymbol \theta\in\mathbb R^{N_n}\), and suppose that \(\Pi_n\) has a Lebesgue density \(\pi_n\). For every \(\kappa>0\), assume that there exists a constant \(\delta_\kappa>0\), independent of \(N\), such that
\[
\inf_{\boldsymbol \theta\in[-\kappa,\kappa]^{N}}\pi(\boldsymbol \theta)
\ge \delta_\kappa^{N_n}.
\]
\end{assumption}

The next assumption is imposed on the prior of the variance. It requires positive prior density near the true noise variance and a mild tail bound.

\begin{assumption}\label{ass:KK-variance-prior}
Let \(\Xi\) be the prior distribution of \(\sigma^2\in\mathbb R_+\). Assume that \(\Xi\) has a Lebesgue density that is continuous and positive on \((0,2\sigma_0^2)\). Also assume that, for sufficiently large \(K\),
\[
\Xi(\sigma^2>K)\lesssim K^{-1}.
\]
\end{assumption}

The following theorem, due to \cite{KongKim2025}, shows that BNNs with general priors satisfying Assumptions \ref{ass:KK-parameter-prior} and \ref{ass:KK-variance-prior} attain minimax-optimal posterior concentration rates around the true regression function, up to a logarithmic factor.

\begin{theorem}[{\cite[Theorem 2]{KongKim2025}}]\label{thm:KK-regression}
Assume that \(h_0\in \mathcal H_d^\alpha(K)\) and \(\|h_0\|_\infty\le B\), where
\(K\ge 1\), \(\alpha>0\), and \(B\ge 1\). Consider the DNN model \(h_{\boldsymbol \theta,\rho_\nu}^{\mathrm{DNN}}\) with
architecture \eqref{arch}. Suppose that the parameter prior $\Pi_n$ and variance prior $\Xi$ satisfy Assumptions \ref{ass:KK-parameter-prior} and \ref{ass:KK-variance-prior}. Then the posterior distribution of \(T_B\circ h_{\boldsymbol \theta,\boldsymbol \rho_\nu}^{\mathrm{DNN}}\) and \(\sigma^2\) concentrates around \(h_0\) and \(\sigma_0^2\) at the rate $\varepsilon_{n}=n^{-\alpha/(2\alpha+d)}\log^\gamma (n),\, \gamma>2.$ More precisely, for every \(M_n\to\infty\),
\begin{align}\label{post}
\Pi_n\left(
(\boldsymbol \theta,\sigma^2):
\left\|T_B\circ h_{\boldsymbol \theta,\boldsymbol \rho_\nu}^{\mathrm{DNN}}-h_0\right\|_{L^2(\mathrm{P}_X)}
+
|\sigma^2-\sigma_0^2|
>
M_n\varepsilon_{n}
\;\middle|\;
\mathcal D^{(n)}
\right)
\to 0
\end{align}
in \(\mathrm{P}_0^{(n)}\)-probability as \(n\to\infty\), where \(\mathrm{P}_0^{(n)}\) denotes the probability law of the training data $\mathcal D^{(n)}$.
\end{theorem}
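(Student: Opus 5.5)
The plan is to follow the general posterior contraction machinery of \cite{GhosalVanderVaart2007,GhosalVanderVaart2017}. It has three ingredients, each established at the rate \(\varepsilon_n=n^{-\alpha/(2\alpha+d)}\log^\gamma(n)\):
\begin{enumerate}[label=(\roman*)]
\item a prior-mass condition on Kullback--Leibler neighbourhoods of \((h_0,\sigma_0^2)\);
\item a sieve whose metric entropy is of order \(n\varepsilon_n^2\);
\item exponentially consistent tests on the sieve, or a replacement for the usual bound on the prior mass of the sieve's complement.
\end{enumerate}
The parameter is the pair \((\boldsymbol\theta,\sigma^2)\). The likelihood is Gaussian with mean \(T_B\circ h_{\boldsymbol\theta,\rho_\nu}^{\mathrm{DNN}}\). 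Because every candidate mean is bounded by \(B\), the Kullback--Leibler divergence, its second moment and the Hellinger distance between \(\mathcal N(h,\sigma^2)\otimes\mathrm P_X\) and \(\mathcal N(h_0,\sigma_0^2)\otimes\mathrm P_X\) are all comparable to \(\|h-h_0\|_{L^2(\mathrm P_X)}^2+|\sigma^2-\sigma_0^2|^2\). This comparability needs \(\sigma^2\) to stay in a compact subinterval of \((0,\infty)\) bounded away from zero.

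\textbf{Approximation and prior mass.} First I will invoke an approximation theorem for deep ReLU/leaky-ReLU networks of fixed width (of the Kohler--Langer or Lu--Shen--Yang--Zhang type). It should give \(\boldsymbol\theta^\ast\) with architecture \eqref{arch} and \(\|h_{\boldsymbol\theta^\ast}-h_0\|_\infty\lesssim \varepsilon_n\). Its weights must be bounded by a constant \(\kappa\), or at worst by a polynomial in \(n\).

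Next, a telescoping argument over the layers shows that \(\boldsymbol\theta\mapsto h_{\boldsymbol\theta}\) is Lipschitz in sup norm on the box \(\|\boldsymbol\theta-\boldsymbol\theta^\ast\|_\infty\le\tau\). The Lipschitz constant is at most \(N_n(r_n(\kappa+1))^{L_n}\). Taking \(\tau=\varepsilon_n/(N_n(r_n(\kappa+1))^{L_n})\), Assumption~\ref{ass:KK-parameter-prior} gives
\[
\Pi_n\bigl(\|\boldsymbol\theta-\boldsymbol\theta^\ast\|_\infty\le\tau\bigr)\ge(2\tau\delta_{\kappa+1})^{N_n}=\exp\bigl(-O(N_nL_n\log n)\bigr).
\]
Since \(N_n\asymp r_n^2L_n\asymp n^{d/(2\alpha+d)}\log n\), the exponent is \(O(n^{d/(2\alpha+d)}\log^3 n)\). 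This is at most \(n\varepsilon_n^2\) once \(\gamma>2\) absorbs the logarithms, which is why \(\gamma>2\) appears in the statement. The continuous positive density of \(\Xi\) near \(\sigma_0^2\) (Assumption~\ref{ass:KK-variance-prior}) gives mass \(\gtrsim\varepsilon_n^2\) to the set \(|\sigma^2-\sigma_0^2|\le\varepsilon_n^2\), and this costs only a logarithm.

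\textbf{Sieve, entropy and tests.} The truncation \(T_B\) makes every mean function bounded, so I will not bound the weights. The sieve will be the whole truncated class \(\mathcal M^{\mathrm{DNN}}_{\rho_\nu}(L_n,r_n,B)\) together with \(\sigma^2\in[\underline s_n,\overline s_n]\). I will bound its \(L^2(\mathrm P_X)\) covering numbers through the pseudo-dimension of piecewise-linear networks, which by Bartlett et al.\ is \(\lesssim N_nL_n\log N_n\). This yields \(\log N(\varepsilon_n,\cdot)\lesssim N_nL_n\log^2 n\lesssim n\varepsilon_n^2\). The \(\sigma^2\)-interval adds only a logarithmic factor. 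Standard Hellinger tests for product Gaussian experiments then provide the tests.

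\textbf{Main obstacle.} The hard step is controlling the variance prior outside the sieve. The tail bound \(\Xi(\sigma^2>K)\lesssim K^{-1}\) is only polynomial. It cannot give the usual requirement that the complement has prior mass \(e^{-Cn\varepsilon_n^2}\). I would first try to use a version of the contraction theorem in which the complement condition is replaced by a ratio condition: the prior mass of the complement divided by the prior mass of the KL neighbourhood must tend to zero. If that fails, I would instead show directly that the posterior of \(\sigma^2\) is consistent. This can be done by comparing likelihoods: for \(\sigma^2\gg1\) the likelihood ratio decays like \(\sigma^{-n}\), and this more than compensates the \(K^{-1}\) tail. Having localized \(\sigma^2\) to a compact interval around \(\sigma_0^2\), I would then run the fixed-variance argument.
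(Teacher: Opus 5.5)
The paper gives no proof of this statement; it quotes it from \cite{KongKim2025}. That proof follows the same Ghosal--van der Vaart template you outline, and your logarithmic bookkeeping for prior mass and entropy is consistent with \(\gamma>2\). The genuine gap is the approximation step, which is where the whole difficulty of the theorem lies. You need a \(\boldsymbol\theta^\ast\) in the fully connected architecture \eqref{arch} satisfying two conditions at once. First, \(\|h_{\boldsymbol\theta^\ast}-h_0\|_\infty\lesssim\varepsilon_n\) with width only \(r_n\asymp n^{d/(2(2\alpha+d))}\), so \(N_n\asymp n^{d/(2\alpha+d)}\log n\). Second, \(\boldsymbol\theta^\ast\in[-\kappa,\kappa]^{N_n}\) for a constant \(\kappa\) independent of \(n\).

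No Kohler--Langer or Lu--Shen--Yang--Zhang theorem gives this. Those constructions reach the required accuracy at this width, but they were designed for unconstrained least squares and let the weights grow with \(n\), for example through the steep ramps that localize to cells of side \(n^{-1/(2\alpha+d)}\). The classical bounded-weight constructions (Schmidt-Hieber type) are sparse networks of width \(n^{d/(2\alpha+d)}\). Made fully connected, they would have \(n^{2d/(2\alpha+d)}\) parameters and would break your entropy bound. A non-sparse approximation theorem with bounded parameters at this width is exactly the new result on which \cite[Theorem 2]{KongKim2025} rests. Your argument therefore presupposes its main lemma rather than proving it.

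Your fallback of polynomially bounded weights also fails under Assumption~\ref{ass:KK-parameter-prior}, which gives no control of \(\delta_\kappa\) as \(\kappa\) grows. For an \(\mathcal N(0,s^2I)\) prior, \(\delta_\kappa\asymp e^{-\kappa^2/(2s^2)}\). Taking \(\kappa_n=n^{a}\) then yields only a prior-mass lower bound \(\exp(-cN_n n^{2a})\), far below \(e^{-Cn\varepsilon_n^2}\). Rescaling layers via the positive homogeneity of \(\rho_\nu\) helps only if the construction already keeps the product of the layerwise weight sizes polynomial in \(n\), so this too has to be built in.

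Separately, the step you single out as the main obstacle is not one. In the variance direction the Hellinger distance is at most a constant times \(|\log\sigma_1-\log\sigma_2|\). A logarithmic grid therefore lets the sieve contain every \(\sigma^2\in[\sigma_0^2/4,S_n]\) with \(S_n=e^{Cn\varepsilon_n^2}\), at an entropy cost of only \(O(\log n)\). The polynomial tail then gives \(\Xi(\sigma^2>S_n)\lesssim S_n^{-1}=e^{-Cn\varepsilon_n^2}\), which is exactly the usual complement condition. The region \(\sigma^2<\sigma_0^2/4\), on which Assumption~\ref{ass:KK-variance-prior} gives no tail control, can be removed by a single test. It rejects when \(\inf_h n^{-1}\sum_i(Y_i-h(X_i))^2<\sigma_0^2/2\) over the truncated class. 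Its type I error tends to zero by a chaining bound built on the same pseudo-dimension estimate, and its type II error is \(e^{-cn}\) by chi-square concentration. So you need neither compact localization of \(\sigma^2\), nor a ratio-type contraction theorem, nor a separate likelihood comparison.
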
 

\subsubsection{Bayesian PINN Posterior Contraction}

We recall the Bayesian PINN contraction result of \cite{ZhaoLu2026}. We begin by defining the prior  on which the result is based.

Let
\[
    \Theta
    :=
    \Theta(L)
    =
    \bigcup_{W\ge 1}
    \bigcup_{S\ge 1}
    \bigcup_{B>0}
    \Theta(L,W,S,B),
\]
where \(L=O(1)\) is fixed. A spike-and-slab prior \(\Pi_\theta\) is placed
on \(\Theta\) as follows. The width \(W\) is assigned the prior
\[
    \Pi_\theta(W=w)
    =
    \frac{\lambda_W^w}{(e^{\lambda_W}-1)w!},
    \qquad w=1,2,\ldots,
\]
for some \(\lambda_W>0\). Conditional on \(W=w\), let \(T=O(w^2)\) be
the total number of parameters and let
\[
    \gamma_i\mid W=w
    \sim
    \operatorname{Ber}\left((1+T\lambda_S)^{-1}\right),
    \qquad i\in[T],
\]
for some \(\lambda_S>0\). The parameter bound is assigned the prior
\[
    B\sim \operatorname{Exp}(\lambda_B),
\]
for some \(\lambda_B>0\), and the individual parameters satisfy
\[
    \theta_i\mid (B=b,\gamma_i=1)\sim U([-b,b]),
    \qquad
    \theta_i\mid \gamma_i=0\sim \delta_0 .
\]
Recall the DNN solution space
\[
    \mathcal F
    :=
    \left\{
        \operatorname{clip}\circ f_\theta:
        \overline{\Omega}\to\mathbb R;
        \theta\in\Theta
    \right\}.
\]
Since the networks use the \(\sigma_3\) activation function, we have
\(\mathcal F\subset H^2(\Omega)\). We equip \(\mathcal F\) with the trace
\(\sigma\)-algebra \(\Sigma_{\mathcal F}\) induced by the Borel
\(\sigma\)-algebra in \(H^2(\Omega)\). The induced prior \(\Pi\) on
\(\mathcal F\) is defined by
\begin{equation}
    \Pi(E)
    :=
    \Pi_\theta
    \left(
        \left\{
            \theta\in\Theta:
            \operatorname{clip}\circ f_\theta\in E
        \right\}
    \right),
    \qquad E\in\Sigma_{\mathcal F}.
    \label{eq:ZL-prior-form}
\end{equation}
\begin{theorem}
Assume that \(u^\ast \in C^\beta(\overline{\Omega})\) is the solution to \eqref{eq:elliptic-model} with \(f \in C^{\beta-2}(\overline{\Omega})\) and \(g \in C^\beta(\partial\Omega)\). Assume that the smoothness level \(\beta\) satisfies $ 2 < \beta \le \beta^\ast$ for some \(\beta^\ast > 2\), and $ \|u^\ast\|_{C^\beta(\overline{\Omega})} \le K$ for some \(K>0\). Then there exists a prior \(\Pi\) of the form \eqref{eq:ZL-prior-form} on a DNN space \(\mathcal F\) such that
\[
    \Pi\left(
        u \in \mathcal F :
        \mathcal E(u) > M_n^2 \varepsilon_n^2
        \,\middle|\, \mathcal D^{(n)}
    \right)
    \to 0,
\]
in \(P_{u^\ast}^{(n)}Q_{u^\ast}^{(n)}\)-probability as \(n\to\infty\)
for any \(M_n\to\infty\).
\end{theorem}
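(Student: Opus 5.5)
This is the recalled result of \cite{ZhaoLu2026}, stated with rate \(\varepsilon_n\) (in that work \(\varepsilon_n \asymp n^{-(\beta-2)/(2(\beta-2)+d)}\) up to logarithmic factors, with \(n_\Omega=n_\partial=n\)). I would prove it through the general Ghosal--Ghosh--van der Vaart contraction framework \citep{GhosalVanderVaart2007,GhosalVanderVaart2017}, applied to the product likelihood \(p_u^{(n)}q_u^{(n)}\). The key observation is that the data model is Gaussian regression in two parts: the interior observations have mean \(\mathcal L u(X_i)\) and the boundary observations have mean \(u(Y_j)\). The Kullback--Leibler divergence and the Hellinger-type affinity between \(\mathrm P_u\mathrm Q_u\) and \(\mathrm P_{u^\ast}\mathrm Q_{u^\ast}\) are therefore comparable, per observation and for bounded means, to
\[
\|\mathcal L(u-u^\ast)\|_{L^2(\Omega)}^2/|\Omega|+\|u-u^\ast\|_{L^2(\partial\Omega)}^2/|\partial\Omega|,
\]
which is \(\mathcal E(u)\) in \eqref{eploss1} up to constants depending on \(\lambda\). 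So it suffices to verify the three standard conditions with \(\mathcal E^{1/2}\) as the metric.

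\textbf{Prior mass.} Using approximation theory for \(\sigma_3\) networks in \(H^2\) (or \(C^2\)), I would find \(\theta^\ast\) with width \(w\asymp \varepsilon_n^{-d/(\beta-2)}\), bounded weights and \(\asymp w\) nonzero entries such that \(\|f_{\theta^\ast}-u^\ast\|_{C^2(\overline\Omega)}\lesssim\varepsilon_n\). The \(C^2\) control gives both \(\|\mathcal L(\cdot)\|_{L^2}\) and the trace term. A Lipschitz-in-parameters bound for fixed depth then shows that a \(\ell^\infty\)-ball of radius \(\varepsilon_n/\mathrm{poly}(w,B)\) around \(\theta^\ast\) stays in the KL neighbourhood. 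Its prior mass under the Poisson-width, Bernoulli-sparsity, exponential-bound and uniform-weight prior is at least \(\exp(-C w\log n)\gtrsim\exp(-Cn\varepsilon_n^2)\).

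\textbf{Sieve and entropy.} I would take \(\mathcal F_n\) to be the networks with width at most \(\bar w_n\), sparsity at most \(\bar s_n\) and bound at most \(\bar B_n\), each a large multiple of the corresponding prior-mass scale. The tails of the Poisson, Bernoulli-count and exponential priors give \(\Pi(\mathcal F_n^c)\le e^{-(C+4)n\varepsilon_n^2}\). The covering number in the \(C^2\)-sup norm satisfies \(\log N\lesssim \bar s_n\log(n\bar B_n\bar w_n)\lesssim n\varepsilon_n^2\), again by parameter-Lipschitzness.

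\textbf{Tests and the main obstacle.} The step I expect to be hardest is constructing exponentially powerful tests in the \(\mathcal E\)-metric. The reason is that the generic Hellinger tests control an averaged squared distance between the means, which matches \(\mathcal E\) only when \(\mathcal L u\) and \(u|_{\partial\Omega}\) are uniformly bounded. Clipping guarantees that bound for \(u\) but not directly for \(\mathcal L u\). I would try restricting the sieve so that \(\|\mathcal L u\|_\infty\lesssim \bar B_n^{c}\) grows only polylogarithmically, and then use likelihood-ratio tests for Gaussian regression with bounded means, in the style of \cite{GhosalVanderVaart2007}. That yields type I/II errors \(e^{-cnj^2\varepsilon_n^2}\) on shells, with at most a log-factor loss absorbed into \(\varepsilon_n\). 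Combining the three conditions in the standard theorem gives the stated contraction for any \(M_n\to\infty\).
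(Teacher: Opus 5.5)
The paper gives no proof of this statement. It is quoted from \cite{ZhaoLu2026}, with $\varepsilon_n$ left undefined, so there is no in-paper argument to compare with, and your outline has to stand on its own.

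The Ghosal--Ghosh--van der Vaart architecture is the natural one, and the prior-mass and entropy steps are plausible in outline. One omission there: you never say where $\beta\le\beta^\ast$ enters. The fixed depth has to be chosen from $\beta^\ast$ so that $\sigma_3$ networks of that depth reach the required $C^2$ approximation rate.

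The genuine gap is in the testing step, where your two requirements on the weight bound $\bar B_n$ contradict each other.
\begin{itemize}
\item Because $B\sim\operatorname{Exp}(\lambda_B)$ and active weights are uniform on $[-B,B]$, you have $e^{-2\lambda_B t}\lesssim\Pi(\|\theta\|_\infty>t)\le e^{-\lambda_B t}$. So $\Pi(\mathcal F_n^c)\le e^{-(C+4)n\varepsilon_n^2}$ forces $\bar B_n\gtrsim n\varepsilon_n^2$, which is a power of $n$.
\item The clip bounds $u$ but not $\mathcal L u$. Over networks with weights up to $\bar B_n$ and up to $\bar s_n$ active weights, the only available bound on $\|\mathcal L u\|_\infty$ is a power of $\bar s_n\bar B_n$, again a power of $n$ rather than a polylogarithm.
\item Imposing $\|\mathcal L u\|_\infty\le\mathrm{polylog}(n)$ directly in the sieve does not rescue this. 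Width-one networks with weights of polylogarithmic size already violate it, and they carry prior mass $e^{-\mathrm{polylog}(n)}$, far above $e^{-(C+4)n\varepsilon_n^2}$.
\item Set $M_n:=\sup_{\mathcal F_n}\|\mathcal L(u-u^\ast)\|_\infty$ and $\Delta=\mathcal L(u-u^\ast)$. The likelihood-ratio tests you invoke have error probabilities of the form $\bigl(\E\, e^{-\Delta(X)^2/8}\bigr)^n$. They therefore only control $\E[1-e^{-\Delta^2/8}]\gtrsim\|\Delta\|_{L^2}^2/M_n^2$.
\end{itemize}
The resulting loss is polynomial in $n$, not logarithmic, and it destroys the rate.

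This is not merely a lossy estimate that a cleverer test could fix. Take an alternative whose residual $\mathcal L(u-u^\ast)$ and trace error vanish outside sets of measure $o(1/n)$. It is within total variation $o(1)$ of $P_{u^\ast}^{(n)}Q_{u^\ast}^{(n)}$, so no test of the required exponential power exists against it, however it is built, even though its $\calE(u)$ can be arbitrarily large.

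Polynomially large weight and sparsity bounds do not obviously exclude such steep, localized networks (or approximate versions of them). Unless the sieve provably excludes them, contraction in $\calE$ needs an additional ingredient. One option is to use Hellinger tests for alternatives that are far in Hellinger, and add a separate argument that the posterior does not charge networks that are close in Hellinger but far in $\calE$. Such an argument would exploit the prior cost of the large weights or extra units needed to create these networks, measured against the posterior mass near $u^\ast$.

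Your outline provides no such mechanism. As it stands, the claim that the tests work ``with at most a log-factor loss absorbed into $\varepsilon_n$'' is unsupported.
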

When the number of interior and boundary measurements differ, \cite{ZhaoLu2026} provides a minimax lower bound for the physics-informed loss, $\mathcal E(u)$, defined in \eqref{eloss}. The result is stated as follows.

\begin{theorem}[{\cite[Theorem 3.5]{ZhaoLu2026}}]\label{thm:ZL-minimax}
Assume \(u^*\in C^\beta(\bar{\Omega})\) is a solution to \eqref{eq:elliptic-model} with
\(\beta>2\), and \(\|u^*\|_{C^\beta(\bar{\Omega})}\le K\). For
\(n_\Omega\in\mathbb N\), let \(\{X_i\}_{i=1}^{n_\Omega}\) be an i.i.d.
sequence of random variables distributed according to the uniform distribution
over \(\Omega\), and \(f_i=f(X_i)+\epsilon_i\) with
\(\epsilon_i\stackrel{\mathrm{i.i.d.}}{\sim}\mathcal N(0,1)\) being independent
from \(X_i\). For \(n_{\partial}\in\mathbb N\), let
\(\{Y_j\}_{j=1}^{n_{\partial}}\) be an i.i.d. sequence of uniformly
distributed random variables over \(\partial\Omega\), and
\(g_j=g(Y_j)+\eta_j\) with
\(\eta_j\stackrel{\mathrm{i.i.d.}}{\sim}\mathcal N(0,1)\) being independent
from \(Y_j\). Denote $\mathcal D=\{X_i,f_i\}_{i=1}^{n_\Omega}
\cup\{Y_j,g_j\}_{j=1}^{n_{\partial}}.$ Then we have
\[
\inf_{\psi}\sup_{u^*}\mathbb E\,\mathcal E(\psi(\mathcal D))
\gtrsim
n_\Omega^{-\frac{2(\beta-2)}{d+2(\beta-2)}}
+
n_{\partial}^{-\frac{2\beta}{d-1+2\beta}},
\]
where the supremum is taken over all \(u^*\in C^\beta(\bar{\Omega})\) which
is a solution to \eqref{eq:elliptic-model} with \(f\in C^{\beta-2}(\bar{\Omega})\),
\(g\in C^\beta(\partial \Omega)\) such that
\(\|u^*\|_{C^\beta(\bar{\Omega})}\le K\), and the infimum is taken over all
estimators $\psi:
(\mathbb R^d)^{\otimes n_\Omega}
\times
\mathbb R^{\otimes n_\Omega}
\times
(\mathbb R^d)^{\otimes n_{\partial}}
\times
\mathbb R^{\otimes n_{\partial}}
\to
C^\beta(\bar{\Omega}).$
\end{theorem}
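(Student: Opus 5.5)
The plan is the standard two-part minimax argument. Since \(\mathcal E(u)\) in \eqref{eploss1} is the sum of two nonnegative terms, I will prove separately
\[
\inf_\psi\sup_{u^*}\E\,\|\mathcal L(\psi(\mathcal D)-u^*)\|_{L^2(\Omega)}^2\gtrsim n_\Omega^{-\frac{2(\beta-2)}{d+2(\beta-2)}},
\qquad
\inf_\psi\sup_{u^*}\E\,\|\psi(\mathcal D)-u^*\|_{L^2(\partial\Omega)}^2\gtrsim n_\partial^{-\frac{2\beta}{d-1+2\beta}}.
\]
Then \(\max(a,b)\ge (a+b)/2\) gives the claimed sum. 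For each part I will build a hypercube of hypotheses \(\{u_\omega\}_{\omega\in\{0,1\}^m}\) inside the admissible class. The hypotheses will be chosen so that only one of the two data sources distinguishes them. I will then apply Assouad's lemma, using that the Kullback--Leibler divergence between Gaussian regression laws is \(\tfrac{n}{2}\|\cdot\|_{L^2}^2\) up to the normalisation of the uniform design (noise variance \(1\)).

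\emph{Interior part.} Fix a smooth reference solution \(u_0\) with \(\|u_0\|_{C^\beta}\le K/2\). Pick a bump \(\psi\in C_c^\infty(B_1)\), \(\psi\not\equiv0\), and \(m\asymp h^{-d}\) disjoint cubes \(Q_k\subset\Omega\) of side \(h\) with centres \(x_k\). Set
\[
u_\omega=u_0+c\sum_k\omega_k h^\beta\psi\bigl((x-x_k)/h\bigr).
\]
Every \(u_\omega\) has the same trace \(g\), so the boundary data carry no information, and \(\|u_\omega\|_{C^\beta}\le K\) for a small constant \(c\). Flipping one coordinate changes \(f=\mathcal Lu_\omega\) only on \(Q_k\), by a function of size \(h^{\beta-2}\). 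Hence the KL divergence between neighbouring data laws is \(\lesssim n_\Omega h^d h^{2(\beta-2)}\), and this is \(\lesssim 1\) for \(h\asymp n_\Omega^{-1/(d+2(\beta-2))}\). For the separation I use locality of \(\mathcal L\):
\[
\|\mathcal L(v-u_\omega)\|_{L^2(\Omega)}^2\ge\sum_k\|\mathcal L(v-u_\omega)\|_{L^2(Q_k)}^2 .
\]
So it suffices that, on each cell, the two local alternatives are separated by \(\|\mathcal L(h^\beta\psi_k)\|_{L^2(Q_k)}^2\gtrsim h^{2\beta-4}h^d\). For small \(h\) the leading term of \(\mathcal L(h^\beta\psi_k)\) is \(-h^{\beta-2}A(x_k):\nabla^2\psi\) in rescaled variables. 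This term is not identically zero, because \(\int\psi\,(-\mathrm{div}(A\nabla\psi))\ge r_{\min}\|\nabla\psi\|^2>0\). Assouad's lemma then yields a lower bound of order \(m\cdot h^{d}h^{2\beta-4}\asymp h^{2(\beta-2)}\), which is the interior rate.

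\emph{Boundary part.} Fix \(f=f_0\) for all hypotheses, so the interior data carry no information. Working in local charts of the smooth manifold \(\partial\Omega\), place \(m\asymp h^{-(d-1)}\) disjoint boundary patches of diameter \(h\), and set \(g_\omega=g_0+c\sum_k\omega_k h^\beta\phi_k\) with rescaled bumps \(\phi_k\). Let \(u_\omega\) solve \(\mathcal Lu_\omega=f_0\), \(u_\omega=g_\omega\). Schauder estimates give \(\|u_\omega\|_{C^\beta(\bar\Omega)}\lesssim\|f_0\|_{C^{\beta-2}}+\|g_\omega\|_{C^\beta(\partial\Omega)}\le K\) for \(c\) small. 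The KL divergence per flip is \(\lesssim n_\partial h^{d-1}h^{2\beta}\), and the loss splits over the patches: \(\|v-u_\omega\|_{L^2(\partial\Omega)}^2\ge\sum_k\|v-g_\omega\|_{L^2(\text{patch}_k)}^2\). Taking \(h\asymp n_\partial^{-1/(d-1+2\beta)}\) gives the bound \(h^{2\beta}\).

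\emph{Main obstacle.} I expect the delicate step to be keeping the hypotheses inside the class \(\|u^*\|_{C^\beta(\bar\Omega)}\le K\) while retaining the claimed separations.
\begin{itemize}
\item In the boundary construction, this needs a sharp Schauder estimate in exactly the \(C^\beta\) scale. When \(\beta\) is an integer I would instead work with Hölder--Zygmund norms, or perturb \(\beta\) slightly.
\item In the interior construction, the lower-order terms \(\nabla A\cdot\nabla\) and \(V\) produce corrections of size \(O(h)\) relative to the leading term. These must be absorbed for \(h\) small enough.
\end{itemize}
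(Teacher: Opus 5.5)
The paper gives no proof of this statement. It is quoted from \cite{ZhaoLu2026} and used only as the benchmark in Corollary~\ref{cor:near-minimax}, so there is no in-paper argument to compare with. Judged on its own, your proposal is a correct proof along standard lines.

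Splitting $\mathcal E$ is legitimate: for every estimator, $\sup_{u^*}\E\,\mathcal E(\psi(\mathcal D))$ dominates both the interior supremum and $\lambda$ times the boundary supremum. The per-flip KL bounds $n_\Omega h^{d+2(\beta-2)}$ and $n_\partial h^{d-1+2\beta}$ are right for uniform designs with unit noise. The loss localizes on cells and patches as you say. The generalized Assouad lemma, with $\|a\|^2+\|a-b\|^2\ge\tfrac12\|b\|^2$ on each cell or patch, then gives $h^{2(\beta-2)}$ and $h^{2\beta}$.

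The decisive design choice is the one you made in the boundary part: keep $f$ fixed and lift the boundary bumps through the $\mathcal L$-harmonic extension $\mathcal S_1$. A bump $h^\beta\Phi_k$ inserted directly into $u$ near $\partial\Omega$ would also change $f$ by order $h^{\beta-2}$ on a set of volume $h^d$. The interior data would then carry KL of order $n_\Omega\, n_\partial^{-(d+2\beta-4)/(d-1+2\beta)}$ per flip, which diverges already for $n_\Omega=n_\partial$. The price of the lifting is that you need a global Schauder estimate, whereas the interior construction needs none.

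A few details need tightening.
\begin{enumerate}
\item[(i)] Take $\psi\in C_c^\infty(B_{1/2})$, or cubes of side $2h$, so that $\psi((\cdot-x_k)/h)$ is actually supported in $Q_k$.
\item[(ii)] Make the per-cell separation uniform in $k$ via Cauchy--Schwarz: $\|A(x_k):\nabla^2\psi\|_{L^2}\ge r_{\min}\|\nabla\psi\|_{L^2}^2/\|\psi\|_{L^2}$. After that, the $O(h)$ corrections from $A(x)-A(x_k)$, $\nabla A$ and $V$ are absorbed as you indicate.
\item[(iii)] In the boundary part, ``for $c$ small'' only controls the perturbation, not the base point. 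Apply Schauder to $u_\omega-u_0=c\,\mathcal S_1\bigl(\sum_k\omega_k h^\beta\phi_k\bigr)$ and choose, e.g., $f_0=0$, $g_0=0$.
\item[(iv)] For integer $\beta$, the H\"older--Zygmund option does not by itself work. The Zygmund ball is larger than the $C^\beta$ ball, so hypotheses known only to be Zygmund-bounded give no lower bound over the class in the theorem. A clean fix is interpolation. With $G=c\sum_k\omega_k h^\beta\phi_k$ one has $\|G\|_{C^{\beta-\delta}(\partial\Omega)}\lesssim h^{\delta}$ and $\|G\|_{C^{\beta+\delta}(\partial\Omega)}\lesssim h^{-\delta}$. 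Schauder holds at the non-integer orders $\beta\pm\delta$ (note $\beta-\delta>2$), and the H\"older interpolation inequality gives $\|\mathcal S_1 G\|_{C^\beta(\overline\Omega)}\lesssim h^{\delta/2}h^{-\delta/2}=1$.
\end{enumerate}
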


\section{Operator-split Bayesian Posterior}\label{sec:split}

In this section, we construct an operator-split Bayesian posterior for the true solution $u^\ast$ to \eqref{eq:elliptic-model}. Our construction regards the interior source $f$ and boundary data $g$ as separate statistical regression problems, and then obtains the corresponding solution by applying the elliptic solution map.

We first introduce the solution maps associated with these data. Independent Bayesian neural-network (BNN) priors are then assigned to $f$ and to the local-coordinate representation of $g$ on the boundary. The resulting posteriors for $f$ and $g$ are combined as a product posterior and pushed forward through the solution map, yielding a posterior law on the solution space.

\subsection{Solution Operators}

Let \(\Omega\subset\mathbb R^d\) be a bounded domain with smooth boundary, and assume that the coefficients of \(\mathcal L\) satisfy the regularity assumptions stated in Section \eqref{elipsec}. For \(\beta>2\), we define the zero-boundary weak solution operator $\mathcal S_0:L^2(\overline\Omega)\to H_0^1(\Omega)$ by $v=\mathcal S_0F$, where \(v\) is the unique weak solution of
\begin{align}\label{eq:S0-definition}
    \mathcal L v &= F \quad  \text{in } \Omega, \\
    v &= 0 \quad \text{on } \partial \Omega.\notag 
\end{align}
We also define the boundary lifting operator, $\mathcal S_1:H^{1/2}(\partial\Omega)\to H^1(\Omega)$ by \(w=\mathcal S_1G\), where \(w\) is the unique weak solution of
\begin{align}\label{eq:H-definition}
    \mathcal L w &= 0 \quad \text{in } \Omega,\\
    w &= G \quad \text{on } \partial \Omega \notag
\end{align}
\begin{figure}[H]
    \centering
    \includegraphics[scale=0.25]{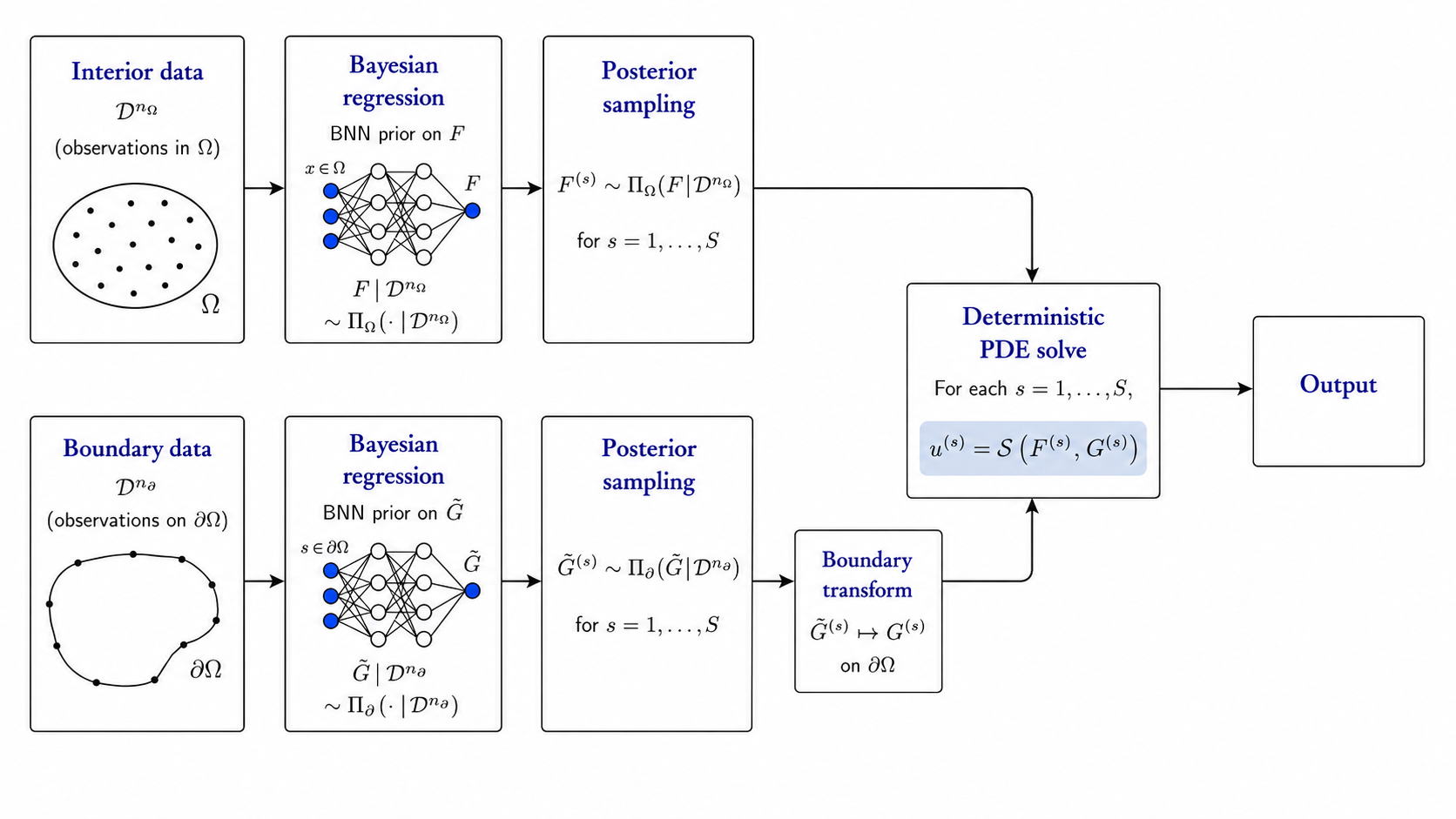}
    \vspace{-2.6em}
    \caption{
    Operator-split Bayesian learning for elliptic problem. Interior observations are used to construct a BNN posterior for the source term \(F\), whereas boundary observations are used to construct a separate posterior for the local-coordinate boundary representation \(\widetilde G\). 
    Posterior samples \(F^{(s)}\) and \(\widetilde G^{(s)}\) are combined by mapping \(\widetilde G^{(s)}\) to the boundary function \(G^{(s)}\), and the corresponding solution sample is obtained through $u^{(s)}=\mathcal S\bigl(F^{(s)},G^{(s)}\bigr).$  The collection of samples \(\{u^{(s)}\}_{s=1}^S\) induces the pushforward posterior on the solution space, from which the posterior mean, variance and credible bands are computed.
    }
    \label{fig:operator-split-bayesian-framework}
\end{figure}
 
By standard elliptic regularity theory
\cite{Grisvard2011,GilbargTrudinger2001}, the problems
\eqref{eq:S0-definition} and \eqref{eq:H-definition} admit unique weak solutions
\(v=\mathcal S_0F\) and \(w=\mathcal S_1G\). Moreover, there exists a constant
\(c=c(\Omega,\mathcal L)>0\) such that
$\norm{\mathcal S_0F}_{H^1(\Omega)}
\le c\norm{F}_{L^2(\Omega)}
\text{ and }
\norm{\mathcal S_1G}_{H^1(\Omega)}
\le c\norm{G}_{H^{1/2}(\partial\Omega)}.$
When \(F\in C^{\beta-2}(\overline\Omega)\) and
\(G\in C^\beta(\partial\Omega)\), the corresponding weak solution is classical,
and the global Schauder estimate gives a constant
\(c=c(\Omega,\mathcal L,\beta)>0\) such that
$\norm{\mathcal S(F,G)}_{C^\beta(\overline\Omega)}
\le
c\left(
\norm{F}_{C^{\beta-2}(\overline\Omega)}
+
\norm{G}_{C^\beta(\partial\Omega)}
\right).$

For any pair $(F,G)\in C^{\beta-2}(\overline\Omega)\times C^\beta(\partial\Omega),$ denote
\begin{equation}
u^{(F,G)} = \mathcal S(F,G):=\mathcal S_0F+\mathcal S_1G .
\label{eq:solution-map}
\end{equation}
Then \(u^{(F,G)}\in C^\beta(\overline\Omega)\), and by linearity, we obtain 
\begin{align}\label{eq:split-identities}
    \mathcal L u^{(F,G)} &= F \quad  \text{in } \Omega, \\
    u^{(F,G)} &= G \quad \text{on } \partial \Omega.\notag 
\end{align}
Consequently, from \eqref{eloss} we get operator-split physics-informed loss
\begin{equation}
\mathcal E\left(u^{(F,G)}\right)
=
\norm{F-f}_{L^2(\Omega)}^2
+
\lambda\norm{G-g}_{L^2(\partial\Omega)}^2 .
\label{eq:split-loss}
\end{equation}

\subsection{Boundary parametrization}
\label{subsec:boundary-parametrization}

We use local coordinates to represent boundary data as data on a subset of
\(\R^{d-1}\). Let \(x^0\in\partial\Omega \in \mathbb R^d\). Since \(\partial\Omega\) is smooth, there exist a neighborhood \(\mathcal V\) of \(x^0\), an open set
\(\mathcal B\subset\mathbb R^{d-1}\), and a smooth chart
\[
\varphi:\mathcal B\to \dOm\cap \mathcal V
\]
which is a diffeomorphism onto its image. Thus, each boundary point \(y\in\dOm\cap \mathcal V\) can be written uniquely as $y=\varphi(z),
\, z\in \mathcal B.$ This is the standard boundary-straightening representation used, for example, in \cite[Appendix~C.1]{Evans2010}. The variable \(z\) gives the local boundary coordinate. We choose the coordinate patch \(\mathcal B\) so that \(\varphi\) extends smoothly to \(\overline{\mathcal B}\) with nondegenerate parametrization.

For a boundary function \(G:\dOm\to\R\), denote its coordinate representation on \(\mathcal B\) by $\widetilde G(z)=G(\varphi(z)), \, z \in \mathcal{B}$. In particular, for the true boundary data \(g\), define $\widetilde g(z)=g(\varphi(z)).$ Thus, a function on the boundary patch \(\dOm\cap \mathcal V\) is represented in local
coordinates as a function on \(\mathcal B\subset\R^{d-1}\).

Let \(\mathcal J_\varphi\) denote the surface Jacobian of the chart \(\varphi\),
\[
\mathcal J_\varphi(z)
=
\sqrt{\det\!\left((D\varphi(z))^\top D\varphi(z)\right)},
\]
where \(D\varphi(z)\) denotes the derivative matrix of \(\varphi\) with respect to the local boundary coordinate \(z\).

\begin{lemma}\label{lem:boundary-l2-coordinates}
Let \(\varphi:\mathcal B\to\dOm\cap \mathcal V\) be a smooth nondegenerate boundary chart, with \(\mathcal{\overline B}\) compact. Then there exist constants \(0<c_\varphi\le C_\varphi<\infty\) such that
\[
c_\varphi
\norm{\widetilde G-\widetilde g}_{L^2(\mathcal B)}^2
\le
\Bigl\|G-g\Bigr\|_{L^2(\dOm\cap \mathcal V)}^2
\le
C_\varphi
\norm{\widetilde G-\widetilde g}_{L^2(\mathcal B)}^2 .
\]
\end{lemma}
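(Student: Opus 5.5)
The plan is the change-of-variables formula for surface integrals combined with two-sided bounds on the surface Jacobian. Since \(\varphi:\mathcal B\to\dOm\cap\mathcal V\) is a smooth parametrization of the hypersurface patch, the surface measure pulls back as \(\dd S(y)=\mathcal J_\varphi(z)\,\dd z\) for \(y=\varphi(z)\). This is the standard area formula for an injective immersion (see \cite[Appendix~C]{Evans2010}). Applied to the nonnegative measurable function \(|G-g|^2\) on \(\dOm\cap\mathcal V\), and using \((G-g)(\varphi(z))=\widetilde G(z)-\widetilde g(z)\), it gives
\[
\norm{G-g}_{L^2(\dOm\cap\mathcal V)}^2=\int_{\mathcal B}\bigl|\widetilde G(z)-\widetilde g(z)\bigr|^2\,\mathcal J_\varphi(z)\,\dd z .
\]
This identity uses the facts that \(\varphi\) is a bijection onto \(\dOm\cap\mathcal V\) and that \(D\varphi(z)\) has full rank \(d-1\) at every point.

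The second step is to bound \(\mathcal J_\varphi\) above and below on \(\mathcal B\).

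\begin{itemize}
\item \emph{Continuity.} By assumption \(\varphi\) extends smoothly to the compact set \(\overline{\mathcal B}\) with nondegenerate differential there. Hence the Gram matrix \(D\varphi(z)^\top D\varphi(z)\) is continuous on \(\overline{\mathcal B}\), and so is \(\mathcal J_\varphi(z)=\sqrt{\det(D\varphi^\top D\varphi)}\).
\item \emph{Positivity.} Full rank of \(D\varphi(z)\) makes the Gram matrix positive definite, so \(\mathcal J_\varphi(z)>0\) for every \(z\in\overline{\mathcal B}\).
\item \emph{Extreme values.} A positive continuous function on a compact set attains a positive minimum and a finite maximum. Set
\[
c_\varphi:=\min_{\overline{\mathcal B}}\mathcal J_\varphi>0,\qquad C_\varphi:=\max_{\overline{\mathcal B}}\mathcal J_\varphi<\infty .
\]
\end{itemize}

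Inserting \(c_\varphi\le\mathcal J_\varphi\le C_\varphi\) into the integral from the first step gives both inequalities of Lemma~\ref{lem:boundary-l2-coordinates} at once. The constants depend only on the chart, not on \(G\) or \(g\).

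The only delicate point is the lower bound \(c_\varphi>0\). It requires nondegeneracy up to the closure \(\overline{\mathcal B}\), not just on the open set \(\mathcal B\); otherwise \(\mathcal J_\varphi\) could tend to zero at the edge of the patch. This is exactly why the paper assumes that \(\varphi\) extends smoothly to \(\overline{\mathcal B}\) with nondegenerate parametrization. If needed, the chart could instead be restricted to a relatively compact subpatch, which gives the same conclusion. Measurability is not an issue: if \(G,g\in L^2\), the composition with the diffeomorphism \(\varphi\) is measurable, and both sides of the inequality are finite together.
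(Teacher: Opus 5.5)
Your proposal is correct and follows the paper's proof: you rewrite the boundary norm as $\int_{\mathcal B}|\widetilde G-\widetilde g|^2\mathcal J_\varphi\,\dd z$ by the surface change-of-variables formula, and then use that $\mathcal J_\varphi$ is continuous and strictly positive on the compact set $\overline{\mathcal B}$ to get the two-sided bounds. Your remark that nondegeneracy must hold up to the closure is exactly the assumption the paper imposes when it chooses the patch $\mathcal B$ so that $\varphi$ extends smoothly and nondegenerately to $\overline{\mathcal B}$.
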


\begin{proof}
By the surface change-of-variables formula, it is easy to see that
\[
\Bigl\|{G-g}\Bigr\|_{L^2(\dOm\cap \mathcal V)}^2
=
\int_\mathcal{B}
\abs{\widetilde G(z)-\widetilde g(z)}^2
\mathcal J_\varphi(z)\,\dd z .
\]
Since \(\mathcal{\overline B}\) is compact and \(\varphi\) is smooth and nondegenerate,
\(\mathcal J_\varphi\) is continuous and strictly positive on \(\overline B\). Hence, there
exist constants \(0<c_\varphi\le C_\varphi<\infty\) such that $c_\varphi\le \mathcal J_\varphi(z)\le C_\varphi,\, z\in \mathcal B.$ The result follows immediately.
\end{proof}

\begin{lemma}\label{lem:boundary-observations-coordinates}
Let \(Y\in\dOm\cap \mathcal V\), and suppose that $g_Y=g(Y)+\eta$ is a noisy boundary observation. If \(Y=\varphi(Z)\) for some \(Z\in \mathcal B\), then $g_Y=\widetilde g(Z)+\eta.$ Moreover, if \(Y\sim \mathcal{U}(\dOm\cap\mathcal V)\), then the induced density of \(Z=\varphi^{-1}(Y)\) on \(\mathcal B\) is bounded.
\end{lemma}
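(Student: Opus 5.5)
The first assertion is purely algebraic. Since \(Y=\varphi(Z)\) with \(Z\in\mathcal B\), the definition \(\widetilde g=g\circ\varphi\) gives \(g(Y)=g(\varphi(Z))=\widetilde g(Z)\). Hence
\[
g_Y=g(Y)+\eta=\widetilde g(Z)+\eta .
\]
Because \(\eta\) is independent of \(Y\), and \(Z=\varphi^{-1}(Y)\) is a measurable function of \(Y\), the noise \(\eta\) is also independent of \(Z\). The observation is therefore a standard regression observation in the local coordinate \(Z\), with the same Gaussian noise. Measurability of \(Z\) holds because \(\varphi\) is a diffeomorphism onto \(\dOm\cap\mathcal V\), so \(\varphi^{-1}\) is continuous.

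For the density claim, I will compute the law of \(Z\) directly. Let \(Y\sim\mathcal U(\dOm\cap\mathcal V)\), so \(Y\) has density \(1/|\dOm\cap\mathcal V|\) with respect to \(\dd S\). The patch has positive, finite surface measure since it is a nonempty relatively open subset of a compact smooth surface.

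For a Borel set \(E\subset\mathcal B\), the surface change-of-variables formula already used in the proof of \Cref{lem:boundary-l2-coordinates} gives
\[
\Pr(Z\in E)=\Pr\bigl(Y\in\varphi(E)\bigr)=\frac{1}{|\dOm\cap\mathcal V|}\int_{\varphi(E)}\dd S=\frac{1}{|\dOm\cap\mathcal V|}\int_E \mathcal J_\varphi(z)\,\dd z .
\]
So \(Z\) has Lebesgue density
\[
p_Z(z)=\frac{\mathcal J_\varphi(z)}{|\dOm\cap\mathcal V|},\qquad z\in\mathcal B .
\]

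Boundedness then follows from the bounds in \Cref{lem:boundary-l2-coordinates}. Since \(\overline{\mathcal B}\) is compact and \(\varphi\) extends smoothly and nondegenerately there, \(c_\varphi\le\mathcal J_\varphi\le C_\varphi\). This yields
\[
\frac{c_\varphi}{|\dOm\cap\mathcal V|}\le p_Z\le\frac{C_\varphi}{|\dOm\cap\mathcal V|}\quad\text{on }\mathcal B .
\]
Thus \(p_Z\) is bounded above, and in fact also bounded below.

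There is no genuine obstacle here. The only care needed is that \(\varphi\) is a bijection onto the patch, so that \(\Pr(Z\in E)=\Pr(Y\in\varphi(E))\) holds exactly. It also matters that the Jacobian bounds hold on all of \(\mathcal B\), which is why the chart is assumed to extend to \(\overline{\mathcal B}\).
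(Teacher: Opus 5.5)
Your proof is correct and follows the paper's argument: the substitution $g(\varphi(Z))=\widetilde g(Z)$, then the surface change-of-variables formula giving $p_Z=\mathcal J_\varphi/|\dOm\cap\mathcal V|$, and then the Jacobian bounds $c_\varphi\le\mathcal J_\varphi\le C_\varphi$. Your normalization agrees with the paper's $\int_{\mathcal B}\mathcal J_\varphi$ because $\varphi$ is a bijection onto the patch, and your remarks on the measurability of $\varphi^{-1}$ and the independence of $\eta$ from $Z$ are correct details that the paper leaves implicit.
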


\begin{proof}
Since \(Y=\varphi(Z)\), we have
\[
g_Y
=
g(Y)+\eta
=
g(\varphi(Z))+\eta
=
\widetilde g(Z)+\eta.
\]
If \(Y\sim \mathcal{U}(\dOm\cap\mathcal V)\), then the surface change-of-variables formula gives the induced density
of \(Z=\phi^{-1}(Y)\) as
\[
p_Z(z)
=
\frac{\mathcal J_\varphi(z)}
{\int_{\mathcal B} \mathcal J_\varphi(z')\,\dd z'}.
\]
The bounds $0<c_\varphi\le J_\varphi(z)\le C_\varphi<\infty$ imply that
$p_Z$ is bounded above and bounded away from zero on $B$.
\end{proof}

Consequently, on a boundary chart, the boundary observations on
\(\dOm \cap \mathcal V\) are rewritten as observations of the coordinate function
\(\widetilde g:\mathcal B\to\R\), where \(\mathcal B\subset\R^{d-1}\). This is the sense in which the boundary component is represented using its intrinsic \((d-1)\)-dimensional coordinates.

\begin{remark}[Finite-atlas]\label{rem:finite-chart}
A single coordinate chart need not cover the whole boundary. For a compact smooth boundary, finitely many such charts cover
\(\dOm\). Thus, for $\ell=1,\ldots,N,$ one may choose charts
\[
\varphi_\ell:\mathcal B_\ell\subset\R^{d-1}\to\dOm\cap \mathcal V_\ell,
\]
such that $\dOm\subset\bigcup_{\ell=1}^N \mathcal V_\ell.$ Let \(\{\chi_\ell\}_{\ell=1}^L\) be a smooth partition of unity subordinate to
this cover, so that $0\le \chi_\ell\le 1,\, \supp\chi_\ell\subset \mathcal V_\ell,$ and $\sum_{\ell=1}^L \chi_\ell(y)=1$ for $y\in\dOm.$ Then boundary integrals can be written as finite sums of coordinate
integrals. In particular, for boundary functions \(G\) and \(g\), one has
\[
\Bigl\|G-g\Bigr\|_{L^2(\dOm)}^2
=
\sum_{\ell=1}^L
\int_{\mathcal B_\ell}
\chi_\ell(\varphi_\ell(z))
\abs{G(\varphi_\ell(z))-g(\varphi_\ell(z))}^2
\mathcal J_{\varphi_\ell}(z)\,\dd z .
\]
\end{remark}

\subsection{Pushforward Posterior}\label{push}
Given the training data $\mathcal D^{(n_\Omega)} = \{(X_i,f_i)\}_{i=1}^{n_\Omega}$ and $\mathcal D^{(n_\partial)} = \{(Y_j,g_j)\}_{j=1}^{n_\partial}$, denoting the interior and boundary data sets, respectively, we write $\mathcal D^{(n_\Omega,n_\partial)}=\mathcal D^{(n_\Omega)}\cup \mathcal D^{(n_\partial)}.$ Using the local boundary coordinates introduced in the preceding section, with \(Y_j=\varphi(Z_j)\), we represent the boundary data as $\widetilde{\mathcal D}^{(n_\partial)}=\{(Z_j,g_j)\}_{j=1}^{n_\partial}.$

Conditional on \(F\) and the interior
noise variance \(\sigma_\Omega^2\), the interior observations satisfy
\[
f_i\mid X_i,F,\sigma_\Omega^2
\sim
\mathcal N(F(X_i),\sigma_\Omega^2),
\qquad i=1,\ldots,n_\Omega .
\]
Similarly, conditional on \(\widetilde G\) and the boundary noise variance
\(\sigma_\partial^2\), the boundary observations satisfy
\[
g_j\mid Z_j,\widetilde G,\sigma_\partial^2
\sim
\mathcal N(\widetilde G(Z_j),\sigma_\partial^2),
\qquad j=1,\ldots,n_\partial .
\label{eq:boundary-bayesian-model}
\]
Let \(\Pi_\Omega\) and \(\Pi_\partial\) be the BNN priors
for \(F\) and \(\widetilde G\), respectively. Let \(\Xi_\Omega\) and
\(\Xi_\partial\) be the variance priors for \(\sigma_\Omega^2\) and
\(\sigma_\partial^2\), respectively. Thus, the interior prior
is \(\Pi_\Omega\otimes \Xi_\Omega\), and the boundary prior is
\(\Pi_\partial\otimes \Xi_\partial\). The corresponding likelihoods are given by 
\begin{align*}
    \mathscr L_\Omega(F,\sigma_\Omega^2;\mathcal D^{(n_\Omega)})
&=
\prod_{i=1}^{n_\Omega}
\frac{1}{\sqrt{2\pi\sigma_\Omega^2}}
\exp\left[
-\frac{\{f_i-F(X_i)\}^2}{2\sigma_\Omega^2}
\right],\\
\mathscr L_\partial(\widetilde G,\sigma_\partial^2;
\widetilde{\mathcal D}^{(n_\partial)})
&=
\prod_{j=1}^{n_\partial}
\frac{1}{\sqrt{2\pi\sigma_\partial^2}}
\exp\left[
-\frac{\{g_j-\widetilde G(Z_j)\}^2}{2\sigma_\partial^2}
\right].
\end{align*}
By Bayes' formula, the interior and boundary posteriors are
\begin{align*}
    \Pi_\Omega(\dd F,\dd\sigma_\Omega^2\mid\mathcal D^{(n_\Omega)})
 &\propto
\mathscr L_\Omega(F,\sigma_\Omega^2;\mathcal D^{(n_\Omega)})
\Pi_\Omega^0(\dd F)\Xi_\Omega(\dd\sigma_\Omega^2),\\
\Pi_\partial(\dd\widetilde G,\dd\sigma_\partial^2
\mid
\widetilde{\mathcal D}^{(n_\partial)})
&\propto
\mathscr L_\partial(\widetilde G,\sigma_\partial^2;
\widetilde{\mathcal D}^{(n_\partial)})
\Pi_\partial^0(\dd\widetilde G)\Xi_\partial(\dd\sigma_\partial^2).
\end{align*}
Since the two priors are independent and the two likelihoods factor, the joint posterior over all unknown quantities
\((F,\widetilde G,\sigma_\Omega^2,\sigma_\partial^2)\) satisfies
\begin{align}
&\Pi
(\dd F,\dd\widetilde G,\dd\sigma_\Omega^2,\dd\sigma_\partial^2
\mid
\mathcal D^{(n_\Omega,n_\partial)})
\notag\\
&\quad \propto
\mathscr L_\Omega(F,\sigma_\Omega^2;\mathcal D^{(n_\Omega)})
\mathscr L_\partial(\widetilde G,\sigma_\partial^2;
\widetilde{\mathcal D}^{(n_\partial)})
\notag\\
&\quad \times
\Pi_\Omega(\dd F)\Xi_\Omega(\dd\sigma_\Omega^2)
\Pi_\partial(\dd\widetilde G)\Xi_\partial(\dd\sigma_\partial^2) \notag\\
&\quad =
\Pi_\Omega(\dd F,\dd\sigma_\Omega^2
\mid
\mathcal D^{(n_\Omega)})
\otimes
\Pi_\partial(\dd\widetilde G,\dd\sigma_\partial^2
\mid
\widetilde{\mathcal D}^{(n_\partial)}),
\label{eq:joint-posterior-bayes}
\end{align}
where the product posterior in the final equality is obtained after normalization. 

Given \(\widetilde G\), let \(G\) denote the corresponding boundary function on \(\partial\Omega\). The induced posterior over the solution space is the marginal pushforward
\begin{equation}
\Pi_{\mathcal S}(\mathcal W \mid \mathcal D^{(n_\Omega,n_\partial)})
=
\Pi\left(
(F,\widetilde G,\sigma_\Omega^2,\sigma_\partial^2):
\mathcal S(F,G) \in \mathcal W
\,\middle|\,
\mathcal D^{(n_\Omega,n_\partial)}
\right).
\label{eq:pushforward-posterior}
\end{equation}

\section{Main Result}\label{sec:main-results}
In this section, we establish the main posterior contraction result for the operator-split Bayesian construction. The result shows that the induced posterior over solutions contracts around the true solution, with separate contributions from the interior source observations and the boundary trace observations. The interior term reflects the loss of two derivatives under the second-order elliptic operator and is governed by the \(d\)-dimensional sample size \(n_\Omega\), whereas the boundary term is governed by the intrinsic \((d-1)\)-dimensional sample size \(n_\partial\). We then compare the resulting upper bound with the minimax lower bound of \cite{ZhaoLu2026} and derive a sampling-budget condition under which the boundary contribution does not dominate the contraction radius.

Let \(\gamma>2\) and $\lambda \in (0, \infty)$. For the interior and boundary sample sizes \(n_\Omega\) and \(n_\partial\), define
\begin{equation}
\eps_{n_\Omega}
=
n_\Omega^{-\frac{\beta-2}{d+2(\beta-2)}}(\log n_\Omega)^\gamma,
\quad
\eps_{n_\partial}
=
n_\partial^{-\frac{\beta}{d-1+2\beta}}(\log n_\partial)^\gamma,
\label{eq:two-radii}
\end{equation}
and set $R_{n_\Omega,n_\partial}^2=\eps_{n_\Omega}^2+\lambda\eps_{n_\partial}^2.$

The following result is an immediate consequence of Theorem~\ref{thm:KK-regression}.

\begin{corollary}\label{cor:KK-function-contraction}
Under the assumptions of Theorem \ref{thm:KK-regression}, for every
\(M_n\to\infty\),
\begin{equation}
\Pi_n\left(
(\boldsymbol \theta,\sigma^2):
\left\|T_B\circ h_{\boldsymbol \theta,\boldsymbol \rho_\nu}^{\mathrm{DNN}}
-h_0\right\|_{L^2(\mathrm P_X)}
>
M_n\varepsilon_n
\;\middle|\;
\mathcal D^{(n)}
\right)
\to 0
\label{eq:generic-regression-contraction}
\end{equation}
in \(\mathrm P_0^{(n)}\)-probability as \(n\to\infty\).
\end{corollary}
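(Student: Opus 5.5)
The corollary follows from Theorem~\ref{thm:KK-regression} by monotonicity of posterior probability under set inclusion; no new statistical argument is needed. For each sample $\mathcal D^{(n)}$ we compare the event in \eqref{eq:generic-regression-contraction} with the event in \eqref{post}.

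Define
\[
E_n:=\left\{(\boldsymbol\theta,\sigma^2):\left\|T_B\circ h_{\boldsymbol \theta,\boldsymbol \rho_\nu}^{\mathrm{DNN}}-h_0\right\|_{L^2(\mathrm P_X)}>M_n\varepsilon_n\right\},
\]
and let $F_n$ be the corresponding set in \eqref{post}, where the sum $\|T_B\circ h_{\boldsymbol \theta,\boldsymbol \rho_\nu}^{\mathrm{DNN}}-h_0\|_{L^2(\mathrm P_X)}+|\sigma^2-\sigma_0^2|$ exceeds $M_n\varepsilon_n$. Since $|\sigma^2-\sigma_0^2|\ge 0$, any pair in $E_n$ also lies in $F_n$. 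Hence $E_n\subseteq F_n$ for every $n$.

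A posterior is a probability measure, so for every realization of the data
\[
0\le\Pi_n(E_n\mid\mathcal D^{(n)})\le\Pi_n(F_n\mid\mathcal D^{(n)}).
\]
Theorem~\ref{thm:KK-regression} states that the right-hand side tends to $0$ in $\mathrm P_0^{(n)}$-probability for every sequence $M_n\to\infty$. Fix $\delta>0$. The event $\{\Pi_n(E_n\mid\mathcal D^{(n)})>\delta\}$ is contained in $\{\Pi_n(F_n\mid\mathcal D^{(n)})>\delta\}$, and the probability of the latter vanishes, so the former does too. Equivalently, since the posterior probabilities are bounded by $1$, one can take expectations and use dominated convergence.

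The only point that needs care is measurability: $E_n$ must be a measurable subset of the parameter space, so that its posterior mass is defined. This holds because the map $\boldsymbol\theta\mapsto T_B\circ h_{\boldsymbol\theta,\boldsymbol\rho_\nu}^{\mathrm{DNN}}$ is continuous into $L^2(\mathrm P_X)$. The networks and the truncation are Lipschitz in $\boldsymbol\theta$ on compact input sets, and the outputs are bounded by $B$, so the norm in the definition of $E_n$ is a continuous function of $\boldsymbol\theta$. Beyond this there is no real obstacle.
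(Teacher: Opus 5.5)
Your proposal is correct and matches the paper's proof: both note that the event in the corollary is contained in the event controlled by Theorem~\ref{thm:KK-regression} because $|\sigma^2-\sigma_0^2|\ge 0$, and conclude by monotonicity of the posterior measure. Your measurability remark is a harmless addition the paper omits. Strictly, the network map is only locally Lipschitz in $\boldsymbol\theta$, not globally, but pointwise continuity in $\boldsymbol\theta$ together with the bound $B$ and dominated convergence already gives continuity of the $L^2(\mathrm P_X)$ norm.
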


\begin{proof}
By Theorem \ref{thm:KK-regression},
\[
\Pi_n\left(
(\boldsymbol \theta,\sigma^2):
\left\|T_B\circ h_{\boldsymbol \theta,\boldsymbol \rho_\nu}^{\mathrm{DNN}}
-h_0\right\|_{L^2(\mathrm P_X)}
+
|\sigma^2-\sigma_0^2|
>
M_n\varepsilon_n
\;\middle|\;
\mathcal D^{(n)}
\right)
\to 0.
\]
Since
\[
\left\|T_B\circ h_{\boldsymbol \theta,\boldsymbol \rho_\nu}^{\mathrm{DNN}}
-h_0\right\|_{L^2(\mathrm P_X)}
\le
\left\|T_B\circ h_{\boldsymbol \theta,\boldsymbol \rho_\nu}^{\mathrm{DNN}}
-h_0\right\|_{L^2(\mathrm P_X)}
+
|\sigma^2-\sigma_0^2|,
\]
the event in \eqref{eq:generic-regression-contraction} is contained in the event controlled by Theorem \ref{thm:KK-regression}. Hence its posterior
probability also converges to zero.
\end{proof}
We now state our main contraction theorem, which shows that the operator-split posterior concentrates around the true solution \(u^\ast\) at the rate $R_{n_\Omega,n_\partial}^2=\eps_{n_\Omega}^2+\lambda\eps_{n_\partial}^2.$ 
 
\begin{theorem}\label{thm:main-two-sample}
Assume that \(u^\ast\in C^\beta(\overline\Omega)\) is the solution to
\eqref{eq:elliptic-model}, with
\(f\in C^{\beta-2}(\overline\Omega)\) and
\(g\in C^\beta(\partial\Omega)\), where \(\beta>2\), and suppose that $\norm{f}_{C^{\beta-2}(\overline\Omega)}
+
\norm{g}_{C^\beta(\partial\Omega)}
\le K$
for some \(K>0\). Assume further that the boundary chart
$\varphi:\mathcal B\to\partial\Omega\cap \mathcal V$ covers
$\partial\Omega$ up to a set of surface measure zero. Let \(B\ge K\). Let \(\Pi_\Omega\) and \(\Pi_\partial\) be the posteriors
induced by $F=T_B\circ f_{\boldsymbol\theta,\boldsymbol\rho_\nu}^{\mathrm{DNN}}$ and $\widetilde G
=
T_B\circ g_{\boldsymbol\theta,\boldsymbol\rho_\nu}^{\mathrm{DNN}},$ respectively, with architecture \eqref{arch}, and suppose that their parameter and variance priors satisfy Assumptions~\ref{ass:KK-parameter-prior} and~\ref{ass:KK-variance-prior}. Let \(\Pi_{\mathcal S}\) be the operator-split posterior defined in \eqref{eq:pushforward-posterior}. Then, for every \(M_{n_\Omega,n_\partial}\to\infty\),
\begin{equation}
\Pi_{\mathcal S}\left(
\left\{
u:
\calE(u)
>
M_{n_\Omega,n_\partial}^2
R_{n_\Omega,n_\partial}^2
\right\}
\,\middle|\,
\mathcal D^{(n_\Omega,n_\partial)}
\right)
\to 0
\label{eq:main-contraction-statement}
\end{equation}
in \(\mathrm{P}_{u^\ast}^{(n_\Omega)}
\mathrm{Q}_{u^\ast}^{(n_\partial)}\)-probability as
\(n_\Omega\wedge n_\partial\to\infty\).
\end{theorem}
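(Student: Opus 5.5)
The plan is to reduce the solution-space statement to two independent regression contraction statements, using the operator-split identity \eqref{eq:split-loss}. For every $(F,\widetilde G,\sigma_\Omega^2,\sigma_\partial^2)$ in the support of the product posterior \eqref{eq:joint-posterior-bayes}, the pushforward $u=\mathcal S(F,G)$ satisfies
\[
\calE(u)=\norm{F-f}_{L^2(\Omega)}^2+\lambda\norm{G-g}_{L^2(\partial\Omega)}^2 .
\]
Because the chart covers $\partial\Omega$ up to a null set, \cref{lem:boundary-l2-coordinates} gives $\norm{G-g}_{L^2(\partial\Omega)}^2\le C_\varphi\norm{\widetilde G-\widetilde g}_{L^2(\mathcal B)}^2$. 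Since $X_i\sim\mathcal U(\Omega)$, we have $\norm{F-f}_{L^2(\Omega)}^2=|\Omega|\,\norm{F-f}_{L^2(\mathrm P_X)}^2$. By \cref{lem:boundary-observations-coordinates}, the density of $Z_j$ is bounded below by some $p_{\min}>0$, so $\norm{\widetilde G-\widetilde g}_{L^2(\mathcal B)}^2\le p_{\min}^{-1}\norm{\widetilde G-\widetilde g}_{L^2(\mathrm P_Z)}^2$. Hence there is a constant $C_0$, depending only on $\Omega$ and $\varphi$, with
\[
\calE(\mathcal S(F,G))\le C_0\Bigl(\norm{F-f}_{L^2(\mathrm P_X)}^2+\lambda\norm{\widetilde G-\widetilde g}_{L^2(\mathrm P_Z)}^2\Bigr).
\]

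Next I would check the hypotheses of \cref{thm:KK-regression} for each factor.

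\emph{Interior factor.} Take $h_0=f$, smoothness $\alpha=\beta-2$, input dimension $d$, and sample size $n_\Omega$. We have $\norm{f}_\infty\le K\le B$, and the architecture \eqref{arch} is chosen with these parameters. Corollary~\ref{cor:KK-function-contraction} then gives contraction of $\norm{F-f}_{L^2(\mathrm P_X)}$ at rate $n_\Omega^{-(\beta-2)/(2(\beta-2)+d)}\log^\gamma n_\Omega=\eps_{n_\Omega}$.

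\emph{Boundary factor.} Take $h_0=\widetilde g=g\circ\varphi$ on $\mathcal B\subset\R^{d-1}$, with sample size $n_\partial$. Because $\varphi$ is smooth up to $\overline{\mathcal B}$, $\widetilde g\in\mathcal H^\beta_{d-1}(K')$ with $K'\lesssim K$ by the chain rule, and $\norm{\widetilde g}_\infty\le B$. Note that the H\"older constant changes but the rate does not. The design law $\mathrm P_Z$ lives on the compact set $\overline{\mathcal B}$ (rescaled into some $[-a,a]^{d-1}$), and the noise is Gaussian. The corollary therefore gives contraction of $\norm{\widetilde G-\widetilde g}_{L^2(\mathrm P_Z)}$ at rate $\eps_{n_\partial}$.

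Both applications require the data-generating law to match the model, which holds by \cref{lem:boundary-observations-coordinates} and the sampling assumptions.

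Now combine. Write $M=M_{n_\Omega,n_\partial}\to\infty$ and set $M'=M/\sqrt{2C_0}\to\infty$. If both $\norm{F-f}_{L^2(\mathrm P_X)}\le M'\eps_{n_\Omega}$ and $\norm{\widetilde G-\widetilde g}_{L^2(\mathrm P_Z)}\le M'\eps_{n_\partial}$, then
\[
\calE(u)\le C_0M'^2\bigl(\eps_{n_\Omega}^2+\lambda\eps_{n_\partial}^2\bigr)\le M^2R^2_{n_\Omega,n_\partial}.
\]
So the event in \eqref{eq:main-contraction-statement} is contained in the union of the two complementary events. By the definition \eqref{eq:pushforward-posterior} and the product structure \eqref{eq:joint-posterior-bayes}, its posterior mass is at most
\[
\Pi_\Omega\bigl(\norm{F-f}_{L^2(\mathrm P_X)}>M'\eps_{n_\Omega}\mid\mathcal D^{(n_\Omega)}\bigr)+\Pi_\partial\bigl(\norm{\widetilde G-\widetilde g}_{L^2(\mathrm P_Z)}>M'\eps_{n_\partial}\mid\widetilde{\mathcal D}^{(n_\partial)}\bigr).
\]
The first term tends to zero in $\mathrm P^{(n_\Omega)}_{u^\ast}$-probability and the second in $\mathrm Q^{(n_\partial)}_{u^\ast}$-probability. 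Each depends only on its own data set, so under the product law the sum tends to zero as $n_\Omega\wedge n_\partial\to\infty$, since both sample sizes then diverge. One technical point: $M'$ depends on both indices, so it should be read as a sequence along each coordinate. Convergence for every diverging sequence in \cref{thm:KK-regression} handles this, for instance by passing to subsequences.

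The step I expect to be the main obstacle is justifying $\mathcal S(F,G)$ and the identity \eqref{eq:split-loss} for the network posterior draws. Truncated Leaky-ReLU networks $F$ are only Lipschitz, not $C^{\beta-2}$, and $G$ need not lie in $H^{1/2}(\partial\Omega)$ or be continuous across the boundary of the patch. My first attempt would be to interpret $\mathcal S_0F$ as the $H^2\cap H^1_0$ strong solution for $F\in L^\infty\subset L^2$, which gives $\mathcal L u=F$ a.e. For the boundary part, I would argue that the truncated network $\widetilde G$ is Lipschitz on $\overline{\mathcal B}$, so $G=\widetilde G\circ\varphi^{-1}$ lies in $H^{1/2}$ on the covered part of $\partial\Omega$. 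Then \eqref{eq:split-identities} holds in the trace sense, which is all that \eqref{eq:split-loss} uses. If this fails for the single-chart reading, I would fall back on the finite atlas of Remark~\ref{rem:finite-chart}, running one boundary regression per chart and summing the bounds.
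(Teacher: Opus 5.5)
Your main argument is correct and follows the paper's proof step for step: the split identity \eqref{eq:split-loss}, Corollary~\ref{cor:KK-function-contraction} applied to the interior regression ($\alpha=\beta-2$, dimension $d$) and to the boundary-coordinate regression ($\alpha=\beta$, dimension $d-1$), the norm conversions via Lemmas~\ref{lem:boundary-l2-coordinates} and~\ref{lem:boundary-observations-coordinates}, and a union bound under the product posterior; your explicit rescaling $M'=M/\sqrt{2C_0}$ and your remark on the two-index limit make precise what the paper does by ``absorbing constants into $M$''. The paper never addresses the regularity issue you flag (it simply asserts \eqref{eq:split-loss} for every posterior draw), but your proposed boundary fix does not work as stated: with a single chart, $G=\widetilde G\circ\varphi^{-1}$ generally jumps across the seam $\varphi(\partial\mathcal B)$ where the chart closes up on itself, and a function with such a jump is not in $H^{1/2}(\partial\Omega)$, so you would need either a partition-of-unity blend over a finite atlas (which makes $G$ globally Lipschitz) or a very-weak (transposition) definition of $\mathcal S_1$ on $L^2(\partial\Omega)$.
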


\begin{proof}
We apply Corollary~\ref{cor:KK-function-contraction} to the interior regression problem with
\(X_i\sim \mathcal U(\Omega)\), \(f_i=f(X_i)+\eps_i\),
\(\eps_i\stackrel{\mathrm{i.i.d.}}{\sim}\mathcal N(0,\sigma_{\Omega,0}^2)\),
\(\alpha=\beta-2\), and \(n=n_\Omega\). Since \(f\in C^{\beta-2}(\overline\Omega)\) and
\(\norm{f}_{C^{\beta-2}(\overline\Omega)}\le K\), the assumptions of Corollary~\ref{cor:KK-function-contraction} are satisfied for the interior problem.
Hence, for every \(M_{n_\Omega,n_\partial}\to\infty\), we have 
\begin{align*}
\Pi_\Omega\left(
\left\{
F:
\norm{F-f}_{L^2(\mathrm{P}_X)}
>
M_{n_\Omega,n_\partial}\eps_{n_\Omega}
\right\}
\,\middle|\,
\mathcal D^{(n_\Omega)}
\right)
\to 0
\label{eq:interior-contraction-proof}
\end{align*}
in \(\mathrm{P}_{u^\ast}^{(n_\Omega)}\)-probability. Since \(X_i\sim \mathcal U(\Omega)\), the \(L^2(\mathrm{P}_X)\) and \(L^2(\Omega)\) norms differ only by the constant \(|\Omega|^{-1/2}\). Absorbing this fixed constant into the diverging sequence \(M_{n_\Omega,n_\partial}\), we obtain
\begin{equation}
\Pi_\Omega\left(
\left\{
F:
\norm{F-f}_{L^2(\Omega)}
>
M_{n_\Omega,n_\partial}\eps_{\Omega,n_\Omega}
\right\}
\,\middle|\,
\mathcal D_\Omega^{(n_\Omega)}
\right)
\to 0 .
\label{eq:interior-l2-omega-proof}
\end{equation}

Next, we apply  Corollary~\ref{cor:KK-function-contraction} to the boundary-coordinate regression problem. Write
\(Y_j=\varphi(Z_j)\) and \(\widetilde g(z)=g(\varphi(z))\). For $j= 1,\cdots, n_\partial$, the boundary observations become $g_j=\widetilde g(Z_j)+\eta_j$. Since \(g\in C^\beta(\partial\Omega)\) and the boundary chart $\varphi$ is smooth,
\(\widetilde g\in C^\beta(\mathcal B)\). Therefore, for every \(M_{n_\Omega,n_\partial}\to\infty\),
\begin{align*}
\Pi_\partial\left(
\left\{
\widetilde G:
\norm{\widetilde G-\widetilde g}_{L^2(\mathrm{P}_Z)}
>
M_{n_\Omega,n_\partial}\eps_{n_\partial}
\right\}
\,\middle|\,
\widetilde{\mathcal D}^{(n_\partial)}
\right)
\to 0
\label{eq:boundary-coordinate-contraction-proof}
\end{align*}
in \(\mathrm{Q}_{u^\ast}^{(n_\partial)}\)-probability. Since the chart $\varphi$ covers $\partial\Omega$ up to a set of surface
measure zero, Lemmas~\ref{lem:boundary-l2-coordinates} and ~\ref{lem:boundary-observations-coordinates} imply that the
$L^2(P_Z)$ and $L^2(\partial\Omega)$ norms are equivalent up to fixed multiplicative constants. Absorbing these constants into \(M_{n_\Omega,n_\partial}\), we get
\begin{equation}
\Pi_\partial\left(
\left\{
G:
\norm{G-g}_{L^2(\partial\Omega)}
>
M_{n_\Omega,n_\partial}\eps_{n_\partial}
\right\}
\,\middle|\,
\widetilde{\mathcal D}^{(n_\partial)}
\right)
\to 0 .
\label{eq:boundary-l2-proof}
\end{equation}
By the operator-split identity \eqref{eq:split-loss}, every posterior draw \(u^{(F,G)}=\mathcal S(F,G)\) satisfies
\[
\calE(u^{(F,G)})
=
\norm{F-f}_{L^2(\Omega)}^2
+
\lambda\norm{G-g}_{L^2(\partial\Omega)}^2 .
\]
Thus, if $\norm{F-f}_{L^2(\Omega)} \le
M_{n_\Omega,n_\partial}\eps_{n_\Omega}$ and $\norm{G-g}_{L^2(\partial\Omega)} \le M_{n_\Omega,n_\partial}\eps_{n_\partial},$ then $\calE(u^{(F,G)})
\le
M_{n_\Omega,n_\partial}^2R_{n_\Omega,n_\partial}^2.$ Consequently,
\begin{align*}
&\left\{
u^{(F,G)}:
\calE(u^{(F,G)})
>
M_{n_\Omega,n_\partial}^2R_{n_\Omega,n_\partial}^2
\right\} \\
&\qquad \subset \left\{
F:
\norm{F-f}_{L^2(\Omega)}
>
M_{n_\Omega,n_\partial}\eps_{n_\Omega}
\right\} \cup \left\{
G:
\norm{G-g}_{L^2(\partial\Omega)}
>
M_{n_\Omega,n_\partial}\eps_{n_\partial}
\right\}.
\end{align*}
Using the pushforward definition of \(\Pi_{\mathcal S}\) and the product-posterior factorization, we obtain
\begin{align}
&\Pi_{\mathcal S}\left(
\left\{
u:
\calE(u)
>
M_{n_\Omega,n_\partial}^2R_{n_\Omega,n_\partial}^2
\right\}
\,\middle|\,
\mathcal D^{(n_\Omega,n_\partial)}
\right)
\notag\\
&\qquad\le
\Pi_\Omega\left(
\left\{
F:
\norm{F-f}_{L^2(\Omega)}
>
M_{n_\Omega,n_\partial}\eps_{n_\Omega}
\right\}
\,\middle|\,
\mathcal D^{(n_\Omega)}
\right)
\notag\\
&\qquad\quad+
\Pi_\partial\left(
\left\{
G:
\norm{G-g}_{L^2(\partial\Omega)}
>
M_{n_\Omega,n_\partial}\eps_{n_\partial}
\right\}
\,\middle|\,
\widetilde{\mathcal D}^{(n_\partial)}
\right).
\label{eq:main-proof-union-bound}
\end{align}
Combining \eqref{eq:interior-l2-omega-proof}, \eqref{eq:boundary-l2-proof}, and \eqref{eq:main-proof-union-bound} we obtain
\[
\Pi_{\mathcal S}\left(
\left\{
u:
\calE(u)
>
M_{n_\Omega,n_\partial}^2R_{n_\Omega,n_\partial}^2
\right\}
\,\middle|\,
\mathcal D^{(n_\Omega,n_\partial)}
\right)
\to 0
\]
in \(\mathrm{P}_{u^\ast}^{(n_\Omega)}\mathrm{Q}_{u^\ast}^{(n_\partial)}\)-probability as
\(n_\Omega\wedge n_\partial\to\infty\). 
\end{proof}

\begin{remark}[Contraction of the posterior mean]\label{postmean}
Let 
$
    \bar F=\E_{\Pi_\Omega}\left[F\mid \mathcal D^{(n_\Omega)}\right]\,\text{and} \,\bar G = \E_{\Pi_\partial}\left[G\mid \widetilde{\mathcal D}^{(n_\partial)}\right],
$
and denote the posterior mean by $\bar u=\mathcal S(\bar F,\bar G).$ If, in addition, the second-moment contractions hold, i.e,
\[
\E_{\Pi_\Omega}\left[
\norm{F-f}_{L^2(\Omega)}^2
\mid \mathcal D^{(n_\Omega)}
\right]
=
O_p(\eps_{n_\Omega}^2)
\quad 
\text{and}
\quad 
\E_{\Pi_\partial}\left[
\norm{G-g}_{L^2(\partial\Omega)}^2
\mid \widetilde{\mathcal D}^{(n_\partial)}
\right]
=
O_p(\eps_{n_\partial}^2),
\]
then by linearity of $\mathcal S$, and Jensen's inequality, $\calE(\bar u)=O_p(R_{n_\Omega,n_\partial}^2).$ 
\end{remark}

\begin{remark}[\(H^{1/2}(\Omega)\)-contraction]\label{h-cont}
The contraction statement in Theorem~\ref{thm:main-two-sample} is formulated in the physics-informed loss \(\mathcal E(u)\). By the stability estimate in \cite[Theorem~8]{MullerZeinhofer2022}, this loss also controls the \(H^{1/2}(\Omega)\)-error. Indeed, for any admissible \(u\), the stability estimate and the definition
of \(\mathcal E(u)\) in \eqref{eploss1} give
\begin{align*}
\norm{u-u^\ast}_{H^{1/2}(\Omega)}
&\le C\left(
\norm{\mathcal L(u-u^\ast)}_{L^2(\Omega)}
+
\norm{u-u^\ast}_{L^2(\partial\Omega)}
\right) \\
&\le C\left(1+\lambda^{-1/2}\right)\mathcal E(u)^{1/2},
\end{align*}
for $C>0$. Therefore, Theorem~\ref{thm:main-two-sample} implies that, for every
\(M_{n_\Omega,n_\partial}\to\infty\),
\[
\Pi_{\mathcal S}\left(
\left\{
u:
\norm{u-u^\ast}_{H^{1/2}(\Omega)}
>
M_{n_\Omega,n_\partial}R_{n_\Omega,n_\partial}
\right\}
\,\middle|\,
\mathcal D^{(n_\Omega,n_\partial)}
\right)
\to 0
\]
in \(\mathrm{P}_{u^\ast}^{(n_\Omega)}\mathrm{Q}_{u^\ast}^{(n_\partial)}\)-probability as
\(n_\Omega\wedge n_\partial\to\infty\). Since \(\Omega\) is bounded, this also
implies \(L^2(\Omega)\)-contraction. As in \cite[Remark~3.3]{ZhaoLu2026}, the
\(L^2(\partial\Omega)\) boundary penalty is too weak, by itself, to imply
contraction in \(H^s(\Omega)\) for any \(s>1/2\). Such stronger Sobolev contraction would require either a stronger boundary norm or a separate stability estimate showing that the current loss controls the desired \(H^s(\Omega)\)-error
\end{remark}

\begin{corollary}[Near-minimax contraction rate]\label{cor:near-minimax}
Under the assumptions of Theorem~\ref{thm:main-two-sample}, the contraction radius of the operator-split posterior satisfies
\begin{align}\label{ratepos}
R_{n_\Omega,n_\partial}^2
=
n_\Omega^{-\frac{2(\beta-2)}{d+2(\beta-2)}}(\log n_\Omega)^{2\gamma}
+
\lambda
n_\partial^{-\frac{2\beta}{d-1+2\beta}}(\log n_\partial)^{2\gamma}.
\end{align}
Hence, up to logarithmic factors, the contraction rate matches the two-sample minimax lower bound in Theorem~\ref{thm:ZL-minimax}.
\end{corollary}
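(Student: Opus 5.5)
The corollary has two parts. The first is the explicit form \eqref{ratepos} of \(R^2_{n_\Omega,n_\partial}\). The second is its comparison with the lower bound of Theorem~\ref{thm:ZL-minimax}.

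For the first part, I will substitute the definitions \eqref{eq:two-radii} into \(R_{n_\Omega,n_\partial}^2=\eps_{n_\Omega}^2+\lambda\eps_{n_\partial}^2\). Squaring gives \(\eps_{n_\Omega}^2=n_\Omega^{-2(\beta-2)/(d+2(\beta-2))}(\log n_\Omega)^{2\gamma}\). In the same way, \(\eps_{n_\partial}^2=n_\partial^{-2\beta/(d-1+2\beta)}(\log n_\partial)^{2\gamma}\). This is exactly \eqref{ratepos}. By Theorem~\ref{thm:main-two-sample}, \(R_{n_\Omega,n_\partial}\) is a valid contraction radius for \(\Pi_{\mathcal S}\) in the loss \(\calE\).

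For the comparison, I will put the exponents side by side. The interior term of \eqref{ratepos} has polynomial part \(n_\Omega^{-2(\beta-2)/(d+2(\beta-2))}\). The boundary term has polynomial part \(n_\partial^{-2\beta/(d-1+2\beta)}\). These are term by term the two summands in the lower bound of Theorem~\ref{thm:ZL-minimax}. Since \(\lambda\in(0,\infty)\) is fixed, \(\lambda\) can be absorbed into the implicit constants. So, writing \(\mathfrak m(n_\Omega,n_\partial)\) for that lower bound, I will obtain
\[
\mathfrak m(n_\Omega,n_\partial)\lesssim R_{n_\Omega,n_\partial}^2 \lesssim \mathfrak m(n_\Omega,n_\partial)\,\bigl(\log n_\Omega\vee\log n_\partial\bigr)^{2\gamma}.
\]
Here I use \(a x+ b y\le (a\vee b)(x+y)\) with \(a=(\log n_\Omega)^{2\gamma}\) and \(b=\lambda(\log n_\partial)^{2\gamma}\).

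The main obstacle is interpretive rather than computational: "matching" has to be made precise. Theorem~\ref{thm:ZL-minimax} bounds the minimax risk of estimators \(\psi\), whereas Theorem~\ref{thm:main-two-sample} is a statement about posterior mass. To connect the two, I will use the standard fact that a contraction radius cannot be faster than the minimax rate. The point estimator would be, for example, the center of a smallest \(\calE\)-ball carrying at least half of the posterior mass (as in Ghosal--van der Vaart), or the posterior mean of Remark~\ref{postmean} under its second-moment assumption. Either choice gives a point estimator with error \(O_p(R^2_{n_\Omega,n_\partial})\). Two technical points need care here. First, \(\calE\) is only a quasi-metric; but \(\calE(u)^{1/2}\) is a seminorm of \(u-u^\ast\) by \eqref{eploss1}, so the triangle inequality holds. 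Second, the lower bound is stated for Gaussian noise of variance one and for \(C^\beta\) estimators. Both differences only change constants. Once this is in place, the conclusion is that the upper rate in \eqref{ratepos} agrees with the lower bound up to the factor \((\log n_\Omega\vee\log n_\partial)^{2\gamma}\).
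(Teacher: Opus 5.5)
Your first two paragraphs are the paper's proof. You substitute \eqref{eq:two-radii} into $R^2_{n_\Omega,n_\partial}=\eps_{n_\Omega}^2+\lambda\eps_{n_\partial}^2$, observe that the polynomial parts coincide term by term with the two summands of Theorem~\ref{thm:ZL-minimax}, and absorb the fixed $\lambda$ into constants. Your bound $\mathfrak m\lesssim R^2_{n_\Omega,n_\partial}\lesssim\mathfrak m\,(\log n_\Omega\vee\log n_\partial)^{2\gamma}$ is a slightly more explicit form of ``up to logarithmic factors.'' Its lower half only uses $(\log n)^{2\gamma}\ge 1$ for $n\ge 3$ and $\lambda>0$. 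This already proves the corollary in the sense the paper intends.

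Your third paragraph goes beyond what the paper proves, and as written it does not go through.
\begin{enumerate}
\item The Ghosal--van der Vaart argument that a contraction rate cannot beat the minimax rate needs contraction \emph{uniformly} over the class $\{u^\ast:\norm{u^\ast}_{C^\beta(\overline\Omega)}\le K\}$, over which the supremum in Theorem~\ref{thm:ZL-minimax} is taken. Theorem~\ref{thm:main-two-sample}, like the Kong--Kim result it rests on, is stated for a fixed truth. For a fixed truth, a prior with an atom at $u^\ast$ contracts at any rate, so such a statement carries no minimax content.
\item A point estimator with $\calE(\hat u)=O_p(R^2_{n_\Omega,n_\partial})$ does not control $\sup_{u^\ast}\E\,\calE(\hat u)$, which is the quantity bounded in Theorem~\ref{thm:ZL-minimax}. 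You would need uniform tail control on the exceptional events, together with a truncation of $\hat u$, or else an in-probability version of the lower bound.
\item $\mathcal S(F,G)$ built from truncated Leaky-ReLU networks is not $C^\beta(\overline\Omega)$-valued, so it is not literally an admissible $\psi$. This can be repaired by smoothing $F$ and $G$ and using \eqref{eq:split-loss}, but it is an admissibility issue, not a change of constants.
\end{enumerate}
None of this is needed for the stated comparison. Either drop this step, or, if you want a genuine decision-theoretic optimality claim for the posterior, first upgrade Theorem~\ref{thm:main-two-sample} to a statement uniform over the H\"older ball.
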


\begin{proof}
By definition, $R_{n_\Omega,n_\partial}^2
=
\varepsilon_{n_\Omega}^2
+
\lambda \varepsilon_{n_\partial}^2 .$ Using \eqref{eq:two-radii}, we obtain
\[
R_{n_\Omega,n_\partial}^2
=
n_\Omega^{-\frac{2(\beta-2)}{d+2(\beta-2)}}(\log n_\Omega)^{2\gamma}
+
\lambda
n_\partial^{-\frac{2\beta}{d-1+2\beta}}(\log n_\partial)^{2\gamma}.
\]
On the other hand, Theorem~\ref{thm:ZL-minimax} gives the lower bound
\[
\inf_{\psi}\sup_{u^\ast}
\mathbb E_{u^\ast}
\mathcal E\!\left(\psi(\mathcal D^{(n_\Omega,n_\partial)})\right)
\gtrsim
n_\Omega^{-\frac{2(\beta-2)}{d+2(\beta-2)}}
+
n_\partial^{-\frac{2\beta}{d-1+2\beta}}.
\]
Since \(\lambda>0\) is fixed, the boundary weight only changes the constants
in the comparison. Therefore, the contraction radius in
Theorem~\ref{thm:main-two-sample} agrees with the two-sample minimax lower bound up to logarithmic factors.
\end{proof}

\begin{corollary}[Boundary sampling budget]
\label{cor:boundary-budget}
Ignoring logarithmic factors in \eqref{ratepos}, the boundary term does not dominate the interior term whenever
\begin{equation}
n_\partial\gtrsim n_\Omega^{\kappa(d,\beta)},
\quad
\kappa(d,\beta)=
\frac{(\beta-2)(d-1+2\beta)}{\beta(d+2\beta-4)}.
\label{eq:budget}
\end{equation}
In this regime, the posterior contraction radius is governed by the interior source observations.
\end{corollary}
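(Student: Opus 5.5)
The plan is a direct comparison of the two terms in the contraction radius \eqref{ratepos} of Corollary~\ref{cor:near-minimax}, after discarding the logarithmic factors $(\log n_\Omega)^{2\gamma}$ and $(\log n_\partial)^{2\gamma}$ as the statement allows. I take "the boundary term does not dominate the interior term" to mean
\[
\lambda\, n_\partial^{-\frac{2\beta}{d-1+2\beta}} \lesssim n_\Omega^{-\frac{2(\beta-2)}{d+2(\beta-2)}} .
\]
Since $\lambda\in(0,\infty)$ is fixed, it can be absorbed into the implicit constant. So it suffices to find the power of $n_\Omega$ for which this inequality holds, up to constants, whenever $n_\partial \gtrsim n_\Omega^{\kappa}$.

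The key step is elementary exponent algebra. Both exponents are negative, and the map $t\mapsto t^{-p}$ is decreasing for $p>0$. Raising both sides to the positive power $(d-1+2\beta)/(2\beta)$ turns the inequality into an equivalent lower bound on $n_\partial$:
\[
n_\partial \gtrsim n_\Omega^{\frac{2(\beta-2)}{d+2(\beta-2)}\cdot\frac{d-1+2\beta}{2\beta}}
= n_\Omega^{\frac{(\beta-2)(d-1+2\beta)}{\beta(d+2\beta-4)}} = n_\Omega^{\kappa(d,\beta)} .
\]
Here I use $d+2(\beta-2)=d+2\beta-4$. Any constant $c$ in $n_\partial\ge c\,n_\Omega^\kappa$ becomes the constant $c^{-2\beta/(d-1+2\beta)}$ in the original inequality, so the "$\gtrsim$" relations transfer in both directions.

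In this regime $R^2_{n_\Omega,n_\partial}\asymp \eps_{n_\Omega}^2$ up to logarithmic factors. By Theorem~\ref{thm:main-two-sample} the contraction radius is therefore governed by the interior observations, which gives the final sentence of the statement.

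I do not expect a genuine obstacle. The only point needing care is the treatment of the logarithms. If one wants to keep them, the condition becomes $n_\partial \gtrsim n_\Omega^{\kappa}(\log n_\Omega)^{c}$ for a suitable $c>0$ depending on $\gamma,d,\beta$, which is consistent with the "ignoring logarithmic factors" phrasing. As a sanity check, I would verify that $\kappa<1$ for $\beta>2$, so that fewer boundary than interior samples suffice.
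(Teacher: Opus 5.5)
Your proposal is correct and is essentially the paper's proof: drop the logarithms in \eqref{ratepos} and solve $n_\partial^{-2\beta/(d-1+2\beta)}\lesssim n_\Omega^{-2(\beta-2)/(d+2\beta-4)}$ for $n_\partial$, which you do with more care than the paper (absorbing $\lambda$ and checking that constants transfer). One small correction to your aside on logarithms: no extra factor $(\log n_\Omega)^{c}$ is needed, because $t\mapsto t^{-2\beta/(d-1+2\beta)}(\log t)^{2\gamma}$ is eventually decreasing and $\log(c\,n_\Omega^{\kappa})\lesssim\log n_\Omega$, so $n_\partial\gtrsim n_\Omega^{\kappa}$ already suffices with the logarithms kept.
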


\begin{proof}
The boundary term is no larger than the interior term, up to constants and logarithmic factors, when
\[
n_\partial^{-\frac{2\beta}{d-1+2\beta}}
\lesssim
n_\Omega^{-\frac{2(\beta-2)}{d+2\beta-4}}.
\]
Solving for $n_\partial$ gives
\[
n_\partial\gtrsim
n_\Omega^{\frac{(\beta-2)(d-1+2\beta)}{\beta(d+2\beta-4)}},
\]
as desired.
\end{proof}

\section{Numerical Experiments}
\label{sec:numerical-experiments}

In this section, we present numerical experiments that illustrate the behavior of the proposed operator-split Bayesian approximation. The first experiment considers a one-dimensional Poisson problem and evaluates the posterior propagation of source and boundary uncertainty through the finite element solver. The second experiment considers a manufactured two-dimensional elliptic problem, examines how the interior and boundary sample sizes affect the posterior reconstruction, and evaluates how the computational time scales with the number of retained posterior samples.

\subsection{1D Poisson Equation}
We consider the one-dimensional Poisson problem with similar formulation as in \cite{YangMengKarniadakis2021}. Let $\Omega=(-0.7,0.7).$ The boundary value problem is
\begin{align}
\alpha u_{xx} &= f
\quad \text{in } \Omega, \nonumber\\
u(-0.7)&=g_L,\nonumber \\ 
u(0.7)&=g_R,
\label{eq_yang_1d_poisson}
\end{align}
where \(\alpha=0.01\). We use the manufactured solution \(u^\ast(x)=\sin^3(6x)\), which gives the source term \(f^\ast(x)=0.01[-27\sin(6x)+81\sin(18x)]\) and the exact boundary data \(g_L=u^\ast(-0.7)\) and \(g_R=u^\ast(0.7)\). We make the assumption that the exact source function and boundary values are unknown. The source data are observed at \(n_\Omega=16\) equidistant sensor locations in \([-0.7,0.7]\), while the boundary data are observed at the two endpoints \(x=-0.7\) and \(x=0.7\). All measurements are assumed to be noisy. We consider two noise levels, $\varepsilon_\Omega,\varepsilon_\partial \sim \mathcal N(0,0.01^2)$ and $\varepsilon_\Omega,\varepsilon_\partial \sim \mathcal N(0,0.1^2),$ for the source and boundary observations, respectively.

\begin{figure}[H]
    \centering
    \includegraphics[width=.8\textwidth]{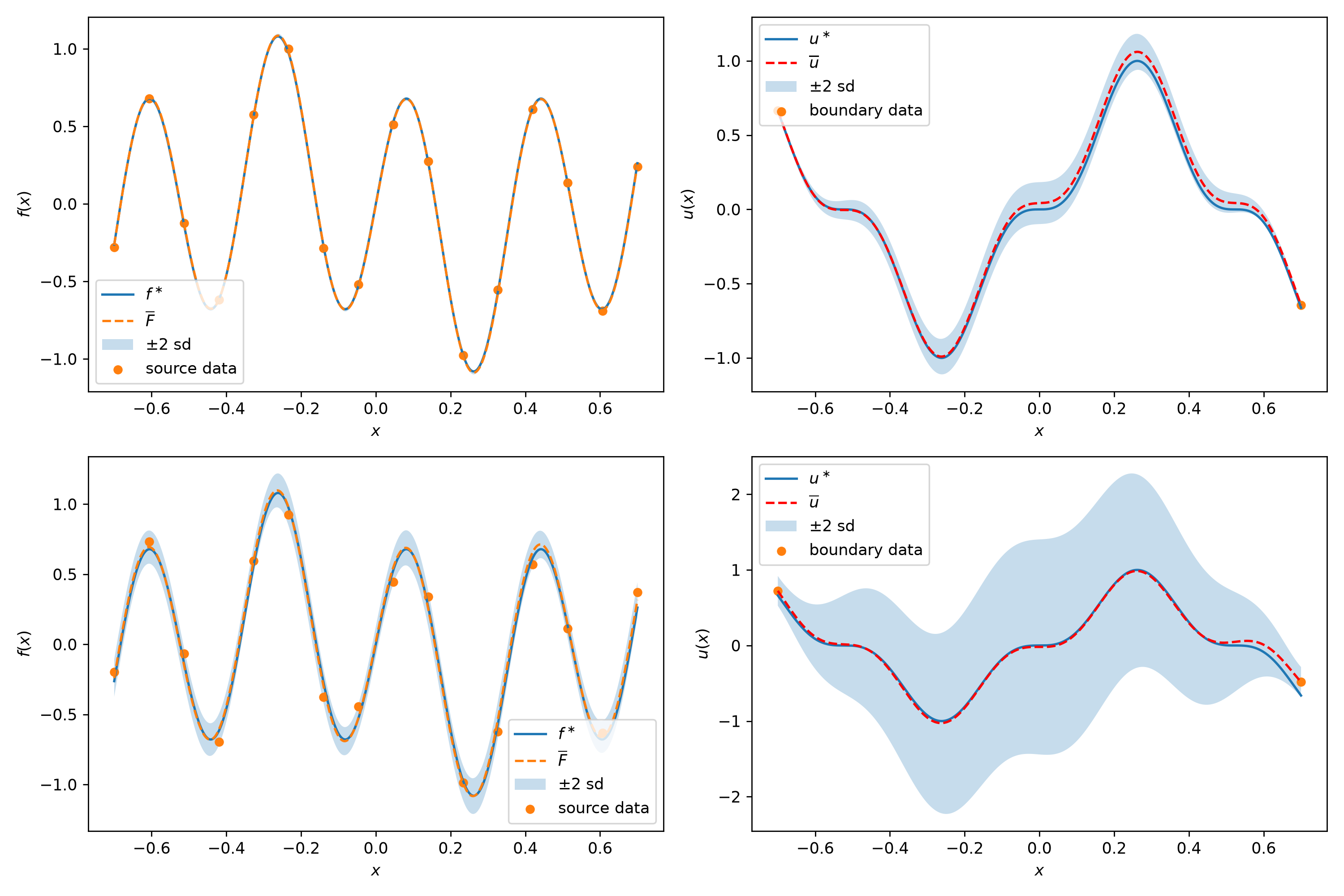}
    \caption{Posterior approximation for the 1D Poisson problem. The source posterior is shown in the left column and the induced solution posterior is shown in the right column, with \(\sigma=0.01\) on the top row and \(\sigma=0.1\) on the bottom row. The dashed curves are posterior means, the shaded regions are two-standard-deviation bands, and the markers are noisy observations.}
    \label{fig:yang-1d-poisson-reconstruction}
\end{figure}
For the operator-split Bayesian approximation, the source term is represented by a Bayesian regression model \(F_{\boldsymbol\theta}\). Since the exact source contains the oscillatory modes \(\sin(6x)\) and \(\sin(18x)\), we use the Fourier feature map
\[
\Phi(x)
=
\left(x,\sin(6x),\cos(6x),\sin(18x),\cos(18x)\right),
\]
and define $F_{\boldsymbol\theta}(x) = \text{NN}_{\boldsymbol\theta}(\Phi(x)).$ The network parameters are assigned the Gaussian prior $\boldsymbol\theta\sim \mathcal N(0,5 I)$. The source posterior is sampled using Hamiltonian Monte Carlo (HMC). The chain is initialized by \(1000\) Adam steps, followed by \(500\) burn-in steps. We use HMC step size \(5\times 10^{-4}\), \(20\) leapfrog steps, thinning interval \(2\), and retain \(500\) posterior samples. Since the one-dimensional boundary consists only of the two endpoints, we sample the boundary values directly instead of fitting a boundary neural network. With the Gaussian prior $G_L,G_R\sim \mathcal N(0,10^2),$ the posterior of each boundary value is Gaussian. For each posterior draw \(\left(F^{(s)},G_L^{(s)},G_R^{(s)}\right)\), we solve the corresponding boundary value problem using piecewise linear finite elements on a uniform mesh with \(512\) subintervals and Gauss--Legendre quadrature of order \(5\). This gives the posterior solution ensemble \(\{u^{(s)}\}_{s=1}^S\), from which we compute the posterior mean and the associated pointwise uncertainty bands.

\subsection{2D Diffusion-Reaction System}
Let $\Omega = (0,1)^2$. We consider the boundary value problem given by
\begin{align}
-\Delta u + u &= f
\quad \text{in } \Omega, \nonumber \\
u&=g \quad \text{on } \partial\Omega.
\label{eq_reaction_diffusion_2d}
\end{align}
We take $u^\ast(x,y)=\sin(\pi x)\sin(\pi y)+x+y$ as the exact solution to \eqref{eq_reaction_diffusion_2d}. Then the source and boundary terms $f^*$ and $g^*=u^\ast|_{\partial\Omega}$ can easily be obtained from \eqref{eq_reaction_diffusion_2d}, which are given by $f^{*}(x,y)=(2\pi^2+1)\sin(\pi x)\sin(\pi y)+x+y$ and 
\[
g^{*}(x,y)=
\begin{cases}
y, & x=0,\\
1+y, & x=1,\\
x, & y=0,\\
x+1, & y=1.
\end{cases}
\]
respectively. We assume that the exact functions \(f\) and \(g\) are not observed continuously. The source data are observed at \(n_\Omega\) uniformly sampled sensor locations in \(\Omega\), whereas the boundary data are observed at \(n_\partial\) uniformly sampled sensor locations on \(\partial\Omega\). We assume that all measurements from the sensors are noisy, and we take \(\varepsilon_{\Omega}\sim \mathcal N(0,0.05^2)\) and \(\varepsilon_{\partial}\sim \mathcal N(0,0.05^2)\) for the source and boundary observations, respectively. 

For the Bayesian regression step, we use two separate fully connected BNNs. The source network \(F_{\boldsymbol \theta}\) maps the spatial coordinate \((x,y)\) to an approximation of the source term \(f(x,y)\). This network has \(4\) hidden layers, each containing \(128\) neurons. The boundary network \(\widetilde G_{\boldsymbol\theta}\) takes the scaled boundary coordinate \(t/4\) as input and approximates \(\tilde g(\gamma(t))\), where \(\gamma:[0,4]\to\partial\Omega\) parametrizes the boundary of the unit square. This network has two hidden layers, each containing \(32\) neurons. Both networks use the activation function $\sigma(\mathbf{z})=\max\{\mathbf{z},0.01\mathbf{z}\}$ and the network parameters are assigned the Gaussian prior $\boldsymbol \theta\sim \mathcal N(0,\tau^2 I),\, \tau=1.$

The posterior samples are generated using stochastic gradient Langevin dynamics (SGLD). For the source network, we use \(10000\) burn-in steps, step size \(5\times 10^{-7}\), batch size \(128\), and retain \(200\) posterior samples after thinning every \(50\) iterations. For the boundary network, we use \(5000\) burn-in steps, step size \(10^{-6}\), batch size \(128\), and retain \(200\) posterior samples with the same thinning interval. For each posterior draw \((F^{(s)},G^{(s)})\), we solve the corresponding boundary value problem using quadratic FEM on a uniform triangular mesh with \(64\) subdivisions in each coordinate direction. The resulting ensemble of FEM solutions is then used to compute the posterior mean and associated uncertainty bands.

\subsubsection{Model Evaluation}
In this section, we detail our evaluation procedure. Let \(S\) denote the number of posterior samples. For each posterior draw
\((F^{(s)},\widetilde G^{(s)})\), we define the corresponding boundary function by
\begin{equation}
G^{(s)}(\gamma(t))=\widetilde G^{(s)}(t/4),
\qquad 0\le t<4.
\end{equation}
The numerical solution obtained from this draw is denoted by $u_h^{(s)}=\mathcal S_h(F^{(s)},G^{(s)}),$ where \(\mathcal S_h\) is the numerical operator for the boundary value problem.

The source error is evaluated on a uniform Cartesian grid with \(N_x\) points in each coordinate direction. For $0\le i,j\le N_x-1,$ let $x_i=i/(N_x-1),\, y_j=j/(N_x-1).$ We define the discrete source error as
\begin{equation}
\mathcal E_F^{(s)}
= \|F^{(s)}-f\|_{L^2(\Omega)}^2 \approx
\Delta x\,\Delta y
\sum_{i=0}^{N_x-1}
\sum_{j=0}^{N_x-1}
\left|
F^{(s)}(x_i,y_j)-f(x_i,y_j)
\right|^2.
\end{equation}
where $\Delta x=1/(N_x-1)$ and $\Delta y=1/(N_x-1)$. 

The boundary error is evaluated on a uniform grid in the boundary coordinate
\(t\). Let \(N_{\partial}\) denote the number of boundary
evaluation points. We write $t_k=4k/N_{\partial},\, 0\le k\le N_{\partial}-1.$ The discrete boundary error for posterior sample \(s\) is defined as
\begin{equation}
\mathcal E_G^{(s)}
=
\|G^{(s)}-g\|_{L^2(\partial\Omega)}^2
\approx
\Delta t
\sum_{k=0}^{N_{\partial}-1}
\left|
G^{(s)}(\gamma(t_k))-g(\gamma(t_k))
\right|^2,
\end{equation}
where $\Delta t=4/N_{\partial}.$

For each posterior draw, we define the discrete analogue of \eqref{eq:split-loss} as 
\begin{equation}\label{split-loss}
\mathcal E(u^{(F, G)})^{(s)}
=
\mathcal E_F^{(s)}
+
\lambda \mathcal E_G^{(s)} .
\end{equation}
For any posterior quantity \(Q^{(s)}\), we define its posterior sample mean and
sample standard deviation by
\begin{align}
\overline Q
&=
\frac{1}{S}\sum_{s=1}^S Q^{(s)},
\qquad
\sigma_Q
=
\left[
\frac{1}{S-1}
\sum_{s=1}^S
\left(Q^{(s)}-\overline Q\right)^2
\right]^{1/2}.
\end{align}

\subsubsection{Numerical Result}

We present the numerical results for the 2D linear diffusion-reaction experiment. As an illustration, we first show the sampling layout for a fixed choice of interior and boundary observations. We then compare the learned source, boundary data, and solution with their exact values. Finally, we examine how the errors change as the number of interior or boundary observations is varied and report the computational cost associated with increasing the posterior ensemble size.

\begin{figure}[H]
    \centering
    \includegraphics[width=0.5\textwidth]{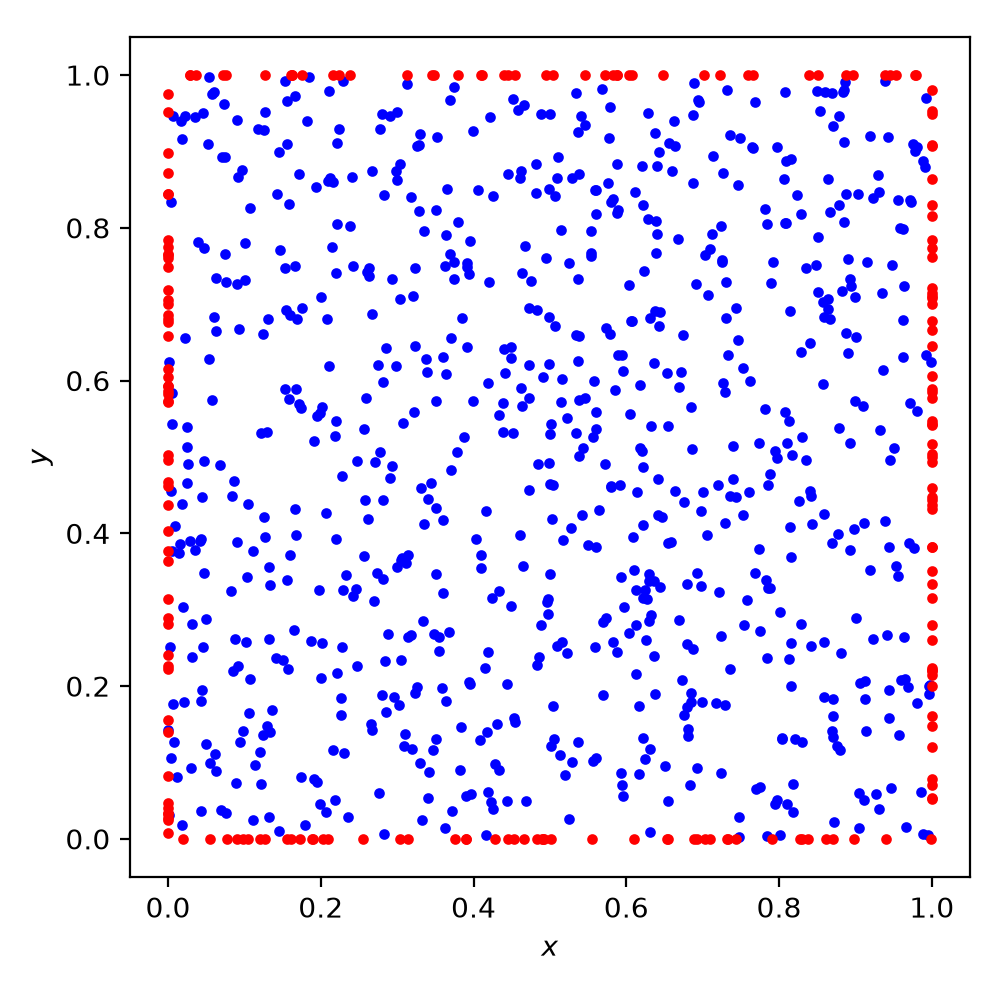}
    \caption{Interior and boundary sampling. Blue points denote interior observations in $\Omega$. Red points denote boundary observations on $\partial\Omega$.}
    \label{fig:single-run-data-layout}
\end{figure}
Figure~\ref{fig:single-run-data-layout} illustrates the sampling configuration with \(n_\Omega=800\) interior observations and \(n_\partial=200\) boundary observations. The interior observation points are sampled uniformly over \(\Omega\), while the boundary observation points are sampled uniformly along \(\partial\Omega\). The operator-split formulation uses these as two distinct data sources, one in the interior of the domain and one on the boundary, with broad spatial coverage of \(\Omega\) and all four boundary segments. These observations form the data used to construct the posterior reconstructions reported in the subsequent results.

\begin{table}[H]
\centering
\small
\caption{Boundary sample-size sweep with fixed interior sample size \(n_\Omega=400,\, \lambda=1\).}
\label{tab:boundary-sweep}
\vspace{.5em}
\begin{tabular}{ccccccc}
\toprule
\(n_\partial\) 
& \(\overline{\mathcal E}_F\)
& \(\sigma_{\mathcal E_F}\)
& \(\overline{\mathcal E}_G\)
& \(\sigma_{\mathcal E_G}\)
& \(\overline{\mathcal E}(u^{(F,G)})\)
& \(\|u^\ast-\overline u\|_{L^2(\Omega)}\) \\
\midrule
25  
& \(3.7537\times 10^{-2}\) 
& \(4.3957\times 10^{-2}\) 
& \(2.4955\times 10^{-1}\) 
& \(3.0777\times 10^{-1}\) 
& \(2.8708\times 10^{-1}\) 
& \(1.6087\times 10^{-2}\) \\

50  
& \(3.7537\times 10^{-2}\) 
& \(4.3957\times 10^{-2}\) 
& \(7.9959\times 10^{-3}\) 
& \(2.6896\times 10^{-3}\) 
& \(4.5533\times 10^{-2}\) 
& \(4.9915\times 10^{-4}\) \\

100 
& \(3.7537\times 10^{-2}\) 
& \(4.3957\times 10^{-2}\) 
& \(2.1275\times 10^{-3}\) 
& \(8.6511\times 10^{-4}\) 
& \(3.9665\times 10^{-2}\) 
& \(8.7000\times 10^{-5}\) \\

200 
& \(3.7537\times 10^{-2}\) 
& \(4.3957\times 10^{-2}\) 
& \(1.1143\times 10^{-3}\) 
& \(5.7318\times 10^{-4}\) 
& \(3.8652\times 10^{-2}\) 
& \(4.9961\times 10^{-5}\) \\
\bottomrule
\end{tabular}
\end{table}

Tables~\ref{tab:boundary-sweep} and~\ref{tab:interior-sweep} depict the separate effects of the boundary and interior sample sizes. In Table~\ref{tab:boundary-sweep}, increasing \(n_\partial\) produces a clear decrease in both the posterior mean and posterior standard deviation of the boundary error, while the source error remains fixed because \(n_\Omega\) is unchanged. For the smallest boundary budget, \(n_\partial=25\), the boundary contribution is larger than the source contribution and dominates the operator-split loss. Once \(n_\partial\) is increased to \(50\), the boundary error falls below the source error, and further increases in \(n_\partial\) yield smaller changes in the total loss. This transition is consistent with Corollary~\ref{cor:boundary-budget}, which predicts that sufficiently many boundary observations prevent the boundary term from dominating the interior term.

In Table~\ref{tab:interior-sweep}, increasing \(n_\Omega\) reduces the posterior mean source error, whereas the boundary error remains essentially unchanged because \(n_\partial\) is fixed. The reduction is most visible from \(n_\Omega=100\) to \(n_\Omega=400\), after which the improvement begins to level off. This behavior is consistent with the rate structure in Corollary~\ref{cor:near-minimax}, where the contraction radius contains separate interior and boundary contributions that decrease with \(n_\Omega\) and \(n_\partial\), respectively. Across both sweeps, the solution error remains small relative to the separate data errors, in agreement with the stability discussion in Remark~\ref{h-cont}. This reflects the smoothing effect of the elliptic solution operator, through which noticeable errors in the learned source and boundary data may still produce a much smaller error in the computed solution.

\begin{table}[H]
\centering
\small
\caption{Interior sample-size sweep with fixed boundary sample size \(n_\partial=100,\, \lambda=1\).}
\label{tab:interior-sweep}
\vspace{.5em}
\begin{tabular}{ccccccc}
\toprule
\(n_\Omega\) 
& \(\overline{\mathcal E}_F\)
& \(\sigma_{\mathcal E_F}\)
& \(\overline{\mathcal E}_G\)
& \(\sigma_{\mathcal E_G}\)
& \(\overline{\mathcal E}(u^{(F,G)})\)
& \(\|u^\ast-\overline u\|_{L^2(\Omega)}\) \\
\midrule
100 
& \(9.7344\times 10^{-2}\) 
& \(4.0232\times 10^{-2}\) 
& \(2.1275\times 10^{-3}\) 
& \(8.6511\times 10^{-4}\) 
& \(9.9471\times 10^{-2}\) 
& \(1.3156\times 10^{-4}\) \\

200 
& \(5.4351\times 10^{-2}\) 
& \(3.0326\times 10^{-2}\) 
& \(2.1274\times 10^{-3}\) 
& \(8.6513\times 10^{-4}\) 
& \(5.6478\times 10^{-2}\) 
& \(9.2686\times 10^{-5}\) \\

400 
& \(3.7537\times 10^{-2}\) 
& \(4.3957\times 10^{-2}\) 
& \(2.1275\times 10^{-3}\) 
& \(8.6511\times 10^{-4}\) 
& \(3.9665\times 10^{-2}\) 
& \(8.7000\times 10^{-5}\) \\

800 
& \(3.6882\times 10^{-2}\) 
& \(5.5538\times 10^{-2}\) 
& \(2.1275\times 10^{-3}\) 
& \(8.6511\times 10^{-4}\) 
& \(3.9010\times 10^{-2}\) 
& \(8.6652\times 10^{-5}\) \\
\bottomrule
\end{tabular}
\end{table}

In Figure~\ref{fig:single-run-boundary-trace}, the posterior mean \(\overline{\widetilde G}(s)\) agrees well with the exact boundary data \(\widetilde g^*(s)\) over the full boundary parametrization. The boundary data are piecewise linear, increasing on the first half of the parameter interval and decreasing on the second half. The posterior mean captures this structure on both segments and remains close to the target values. The largest visible discrepancy occurs near \(s=2\), where the posterior mean slightly smooths the corner of the target function. Away from this point, the two curves are nearly indistinguishable. The pointwise \(95\%\) credible band, computed from the empirical \(2.5\%\) and \(97.5\%\) quantiles of the posterior samples \(\{\widetilde G^{(s)}(s)\}_{s=1}^S\), is narrow throughout the interval. This indicates that the posterior samples for \(\widetilde G\) are tightly concentrated around the posterior mean.
\begin{figure}[H]
    \centering
    \includegraphics[width=0.5\textwidth]{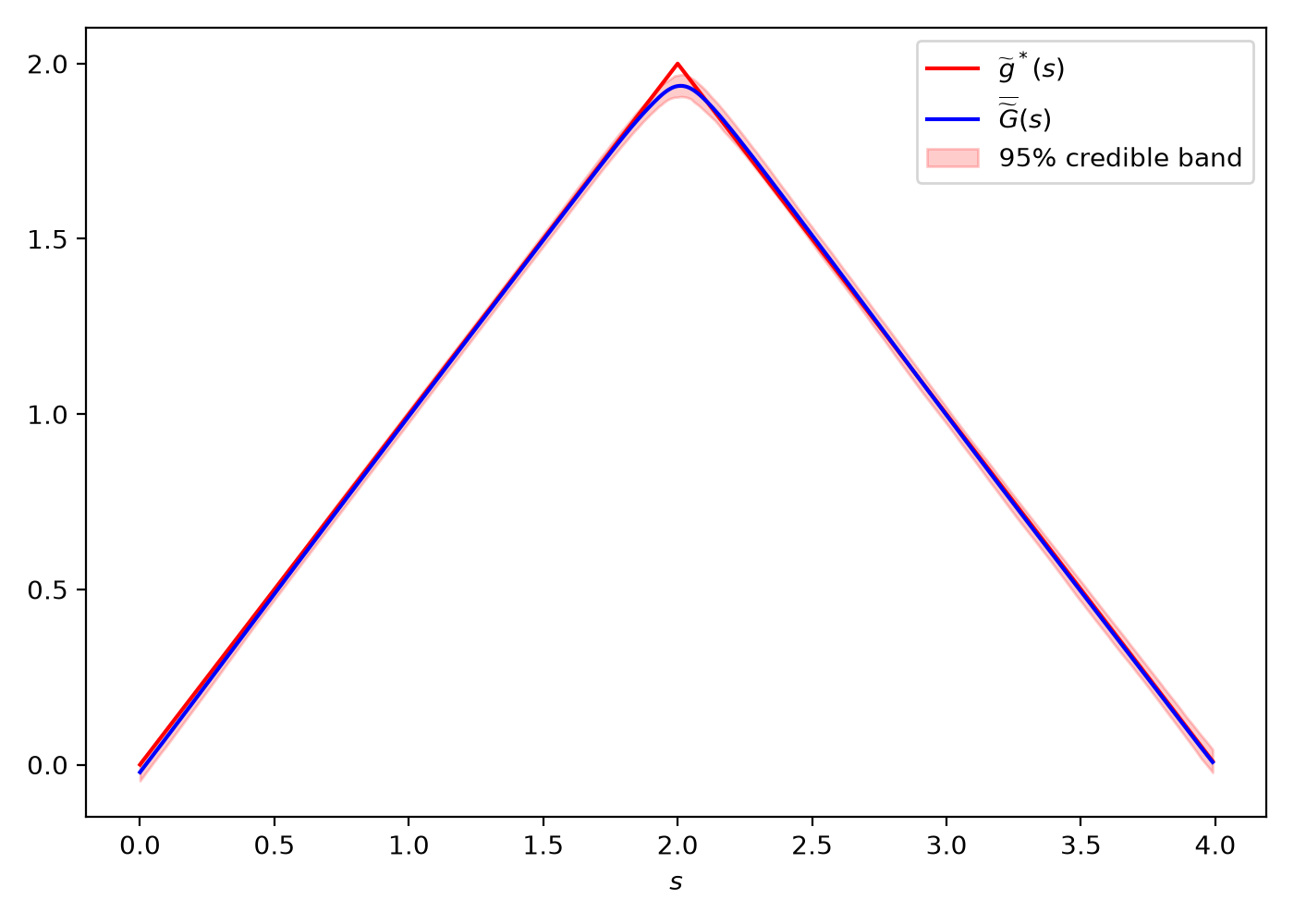}
    \caption{Boundary-data approximation with \(n_\Omega=800\) and \(n_\partial=200\). The figure shows the target data \(\widetilde g(s)\), the posterior mean \(\overline{\widetilde G}(s)\), and the pointwise \(95\%\) credible band.}
    \label{fig:single-run-boundary-trace}
\end{figure}

As shown in Figure~\ref{fig:single-run-source}, the posterior mean \(\bar F\) captures the main qualitative structure of the exact source \(f\). The top-left panel shows a smooth source profile with a central peak and decay toward the boundary, while the top-right panel shows that \(\bar F\) reproduces the same location, shape, and magnitude of this dominant feature. The bottom-left panel shows that the relative error remains small over most of \(\Omega\), with larger values appearing mainly near parts of the boundary. This is expected because the source values are smaller near the boundary, making the relative error more sensitive to small absolute discrepancies. The bottom-right panel shows that the posterior standard deviation \(\sigma_F\) is spatially nonuniform. The uncertainty is lower near the central region and higher near portions of the outer domain, indicating that the posterior samples are more tightly concentrated around the dominant interior structure of the source than near the boundary. All together, the source posterior provides an accurate mean reconstruction while also identifying regions where the approximation is less certain.

\begin{figure}[H]
    \centering
    \includegraphics[width=.7\textwidth]{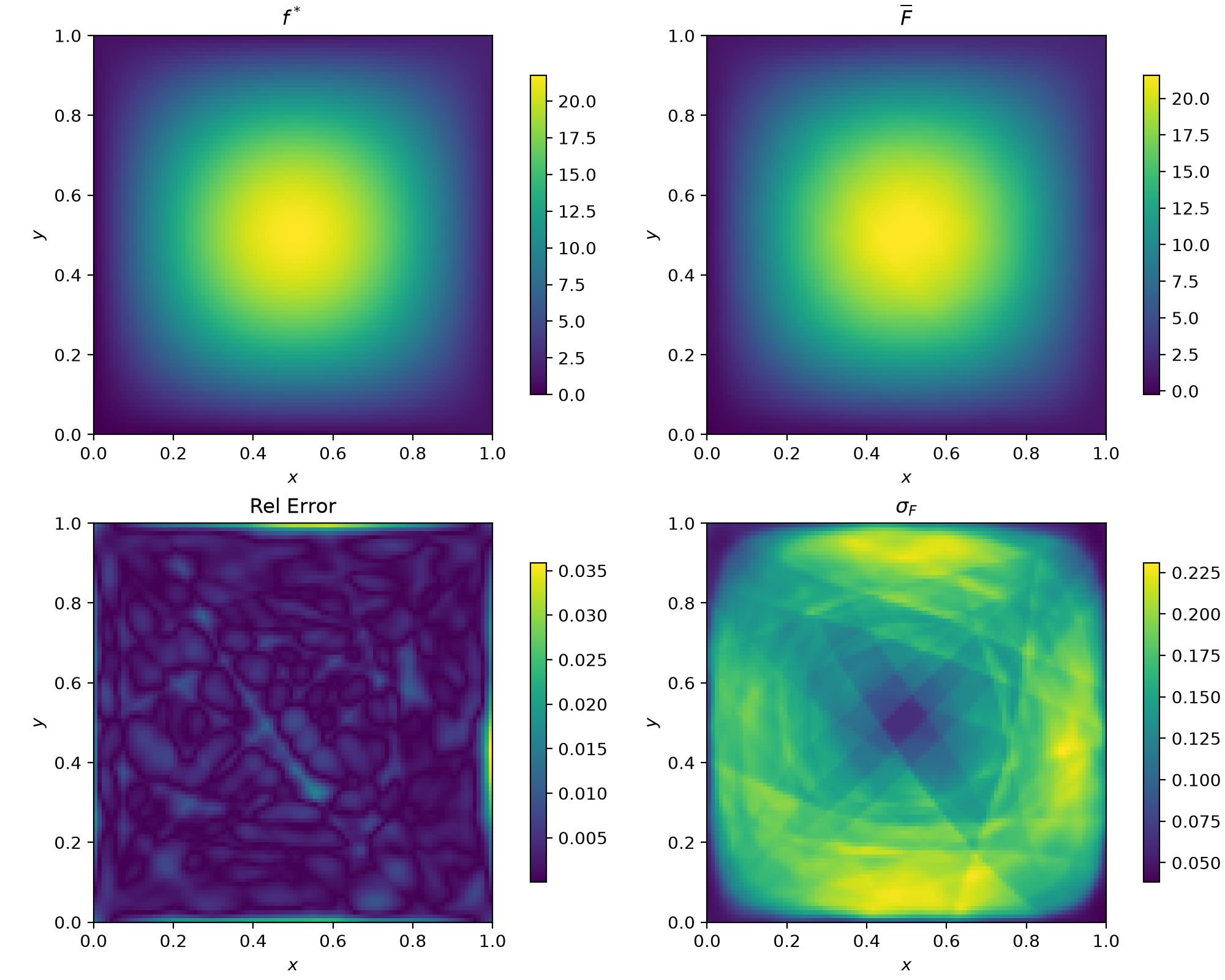}
    \caption{Interior source approximation with \(n_\Omega=800\) and \(n_\partial=200\). The panels show the exact source \(f\), the posterior mean \(\bar F\), the relative error, and the posterior standard deviation \(\sigma_F\).}
    \label{fig:single-run-source}
\end{figure}

\begin{figure}[H]
    \centering
    \includegraphics[width=0.5\textwidth]{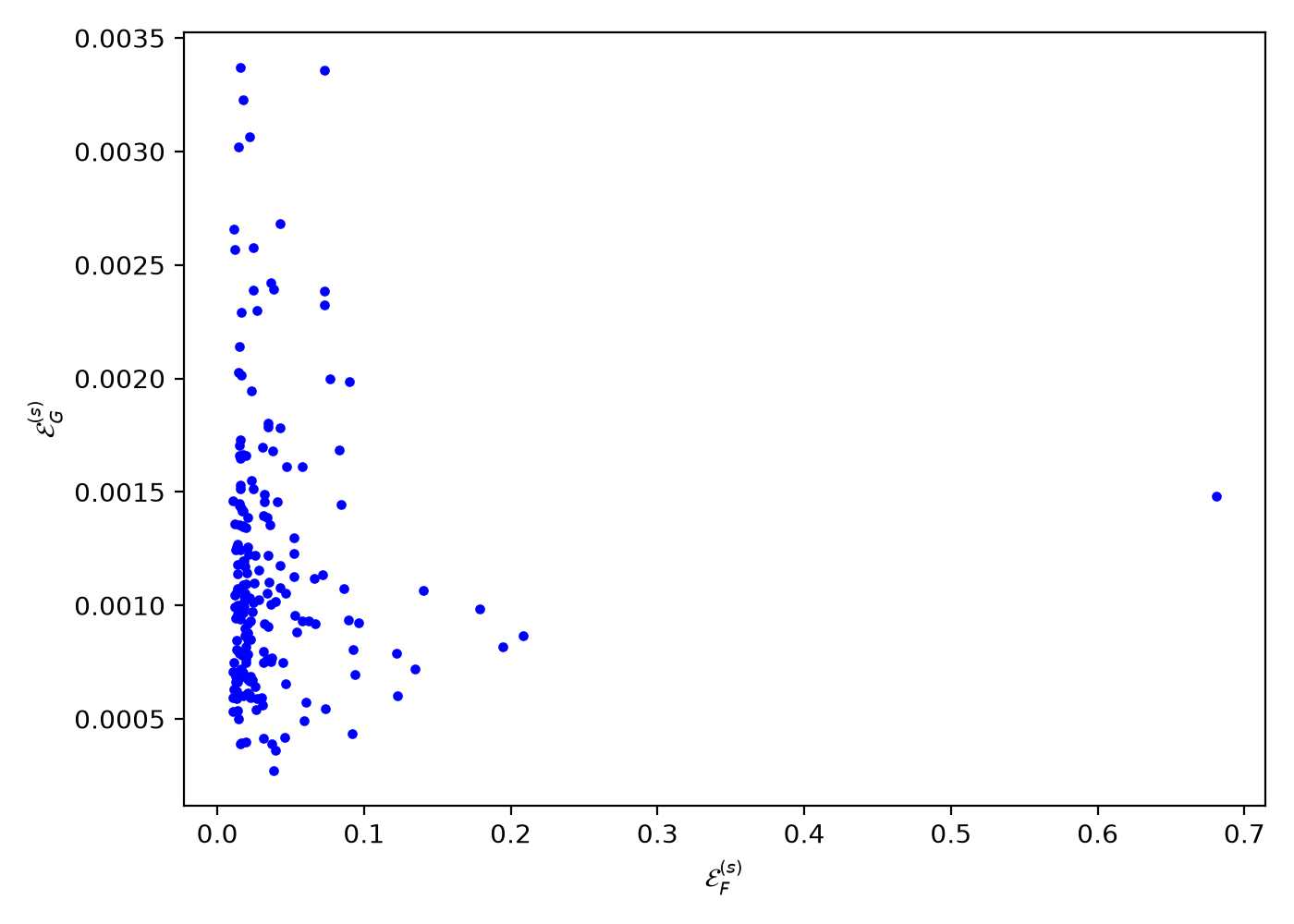}
   \caption{Samplewise error decomposition with \(n_\Omega=800\) and \(n_\partial=200\). The horizontal axis shows the source error \(\mathcal E_F^{(s)}\), and the vertical axis shows the boundary error \(\mathcal E_G^{(s)}\).}
    \label{fig:single-run-split-contributions}
\end{figure}

Figure~\ref{fig:single-run-split-contributions} shows the samplewise contributions \(\mathcal E_F^{(s)}\) and \(\mathcal E_G^{(s)}\) from the posterior draws. Most samples are concentrated near small values of both quantities, indicating that the posterior draws generally produce small source and boundary errors. The point cloud is more spread out along the \(\mathcal E_F^{(s)}\)-axis than along the \(\mathcal E_G^{(s)}\)-axis. This shows that the source contribution varies more across posterior samples, whereas the boundary contribution is more tightly concentrated. Thus, for this experiment, most of the variability in the operator-split physics-informed loss comes from the learned source term. The isolated point with large \(\mathcal E_F^{(s)}\) indicates that a small number of posterior draws can have noticeably larger source error, even when \(\mathcal E_G^{(s)}\) remains small. Taken together, the figure shows that the boundary component is stable across posterior samples, while the interior source component accounts for the dominant samplewise variation.

\begin{figure}[H]
    \centering
    \includegraphics[width=.7\textwidth]{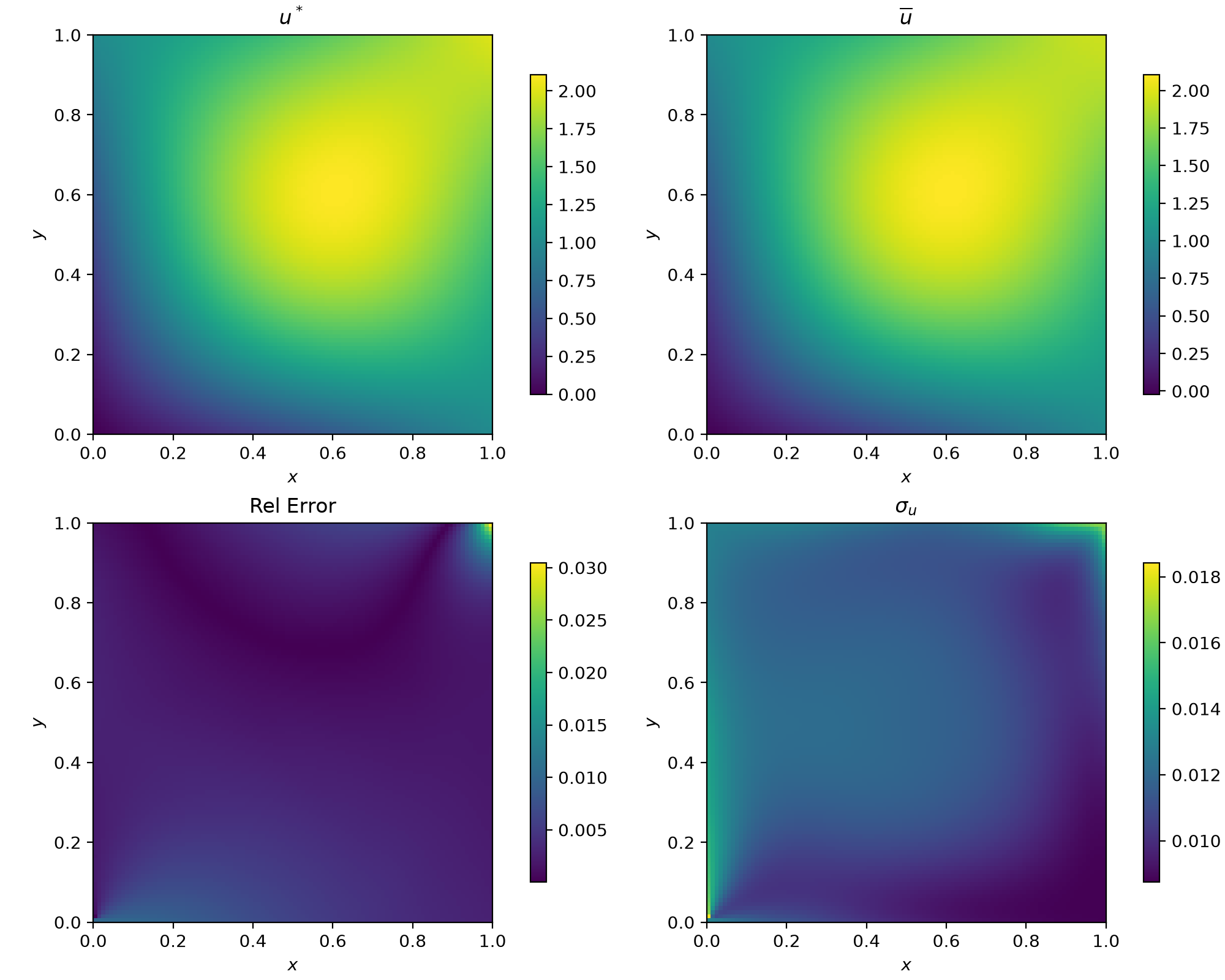}
    \caption{Posterior approximation of the solution with \(n_\Omega=800\) and \(n_\partial=200\). The panels show the exact solution \(u^\ast\), the posterior sample mean \(\bar u\), the relative error, and the posterior standard deviation \(\sigma_u\).}
    \label{fig:single-run-solution}
\end{figure}
The posterior solution approximation is presented in Figure~\ref{fig:single-run-solution}. The top-left panel gives the exact solution \(u^\ast\), which has a smooth profile with values increasing from the lower-left corner and attaining larger values toward the upper-right portion of the domain. The top-right panel shows that the posterior mean \(\bar u\) preserves this global structure, including the overall gradient, interior curvature, and boundary-driven variation. The bottom-left panel reports the relative error. The error remains small over most of \(\Omega\), with the largest values localized near the corners, especially close to the upper-right corner. This indicates that the posterior mean is accurate throughout the interior of the domain, with the most visible discrepancies confined to localized boundary and corner regions. The bottom-right panel reports the posterior standard deviation \(\sigma_u\). Its magnitude is small compared with the posterior uncertainty in the source term shown in Figure~\ref{fig:single-run-source}. This is consistent with the smoothing effect of the elliptic solution operator, since variability in the learned source and boundary data does not propagate directly into equally large variability in the solution. 

Finally, we would like to examine how the computational time changes with the number of retained posterior samples. In order to account for minor runtime fluctuations, each timing experiment was repeated three times under identical computational conditions. The mean runtime and corresponding standard deviation are reported in Table~\ref{tab:runtime-ablation}. 

\begin{table}[H]
    \centering
    \caption{Computational time for propagating different posterior ensemble sizes.}
    \label{tab:runtime-ablation}
    \vspace{.5em}
    \begin{tabular}{rrrrr}
        \toprule
        \(S\)
        & Mean time (s)
        & Time SD (s)
        & Time/sample (ms)
        & Samples/s \\
        \midrule
        200  & 4.4230  & 0.0041 & 22.115 & 45.218 \\
        400  & 8.8462  & 0.0063 & 22.115 & 45.217 \\
        800  & 17.6938 & 0.0143 & 22.117 & 45.214 \\
        1600 & 35.3807 & 0.0294 & 22.113 & 45.222 \\
        3200 & 70.7775 & 0.0571 & 22.118 & 45.212 \\
        \bottomrule
    \end{tabular}
\end{table}
We observe that the posterior propagation time increases essentially linearly with the ensemble size. Increasing the number of posterior samples from \(200\) to \(3200\), a factor of \(16\), increases the mean runtime from \(4.4230\) seconds to \(70.7775\) seconds, which is also approximately a factor of \(16\). A least-squares fit gives $T(S)=-0.0018+0.022118S,\, R^2=0.99999998$ where \(T(S)\) is measured in seconds. The average computational cost remains nearly constant at approximately
\(0.0221\) seconds, or \(22.1\) milliseconds, per posterior draw, corresponding to a throughput of about \(45.2\) posterior solutions per second. The runtime standard deviations are also small, with relative variations below \(0.1\%\) for every ensemble size. Throughout this paper, all our numerical experiments were performed on a single compute node equipped with one NVIDIA A40 GPU with \(48\) GB of memory, 8 CPU cores, and \(128\) GB of system memory.

\section{Conclusion and Discussion}
\label{sec:conclusion}

In this paper, we introduced an operator-split Bayesian framework for elliptic Dirichlet problems with noisy and imbalanced source and boundary observations. Unlike direct Bayesian PINN formulations that place Bayesian priors on neural-network representations of the solution \cite{YangMengKarniadakis2021,SunMukherjeeAtchade2024,ZhaoLu2026}, the proposed construction assigns independent BNN priors to the source term and the boundary data, and then propagates posterior samples through the elliptic solution operator. This decoupling allows the interior and boundary observations to be analyzed at their natural statistical dimensions, yielding separate \(d\)-dimensional and \((d-1)\)-dimensional contributions to the contraction radius. Up to logarithmic factors, the resulting upper bound matches the two-sample minimax lower bound of \cite{ZhaoLu2026}. The boundary sampling condition further identifies when the boundary contribution is no longer the limiting factor, so that the overall rate is governed primarily by the learned source term. In practice, the method avoids repeated automatic differentiation of a neural-network solution inside the physics-informed loss, which is a known bottleneck in residual-based PINNs, especially for higher-order differential operators \cite{SharmaShankar2022}. It also avoids smoothness restrictions on activation functions arising from the evaluation of higher-order derivatives of the neural-network solution \cite{WangLuSongHuang2023}. In our numerical implementation, the stiffness matrix is assembled once, and the reduced stiffness matrix obtained after imposing the Dirichlet boundary conditions is factorized once using a sparse LU factorization. Both the matrix and its factorization are reused across posterior samples; only the sample-dependent right-hand side and boundary contribution are updated from one draw to the next. This makes the posterior propagation step efficient while allowing the solution samples to inherit the stability and accuracy of the underlying discretization.

Our work has several limitations, which open several directions for future research. The analysis presented in the earlier sections is restricted to linear, uniformly elliptic Dirichlet problems with sufficiently regular coefficients, source terms and boundary data, together with Gaussian observation noise, and independent observation models. The cost of solving a deterministic PDE for each posterior draw, though modest in our experiments, may become significant in high dimensions, large-scale geometries, or larger posterior ensembles. Possible remedies include parallel domain-decomposition solvers \cite{SmithBjorstadGropp1996}, multilevel Monte Carlo strategies \cite{Giles2015}, and reduced-basis methods for PDEs \cite{QuarteroniManzoniNegri2016}. Theorem~\ref{thm:main-two-sample} gives a contraction result for the operator-split posterior, whereas \cite{ZhaoLu2026} considers a direct sparse prior on \(u\). Establishing an imbalanced two-sample contraction result for that direct-prior formulation remains open and would require estimates that control the interior residual and boundary penalty simultaneously under unequal sampling. Furthermore, while this study focused mainly on propagating uncertainty from the source and boundary data to the PDE solution, a natural extension is the corresponding Bayesian inverse problem, including the formulation of suitable observation models, conditions for identifiability and stability, and posterior contraction results for recovering unknown source terms, boundary data, or coefficients from noisy solution observations. Further extensions may include nonlinear and time-dependent PDEs, where the solution map and contraction analysis must account for nonlinear coupling and temporal evolution. Adaptive allocation of interior and boundary observations could be developed using the relative sizes of the two error contributions. Non-Gaussian likelihoods would broaden the framework to heavy-tailed noise, outliers, and model mismatch. Stronger boundary norms may yield contraction in stronger Sobolev spaces, while scalable posterior samplers and parallel deterministic solvers would improve efficiency for larger networks, finer discretizations, and larger posterior ensembles.

\bibliographystyle{plainnat}
\bibliography{refs}

\end{document}